\newtheorem{theo+}              {Theorem}           [section]
\newtheorem{prop+}  [theo+]     {Proposition}
\newtheorem{coro+}  [theo+]     {Corollary}
\newtheorem{lemm+}  [theo+]     {Lemma}
\newtheorem{exam+}  [theo+]     {Example}
\newtheorem{rema+}  [theo+]     {Remark}
\newtheorem{defi+}  [theo+]     {Definition}
\newtheorem{clai+}  [theo+]     {Claim}
\newenvironment{theorem}{\begin{theo+}}{\end{theo+}}
\newenvironment{proposition}{\begin{prop+}}{\end{prop+}}
\newenvironment{corollary}{\begin{coro+}}{\end{coro+}}
\newenvironment{lemma}{\begin{lemm+}}{\end{lemm+}}
\theoremstyle{plain} \theoremstyle{remark}
\newtheorem{remark}{Remark}
\def \r{\mbox{${\mathbb R}$}}
\def\E{/\kern-1.0em \equiv }
\title{  Biharmonic  Riemannian submersions  from
 the product space  $M^2\times\r$}
\author{Ze-Ping Wang$^{*}$ and Ye-Lin Ou$^{**}$ }
\address{School of Mathematical Sciences,\newline\indent Guizhou
Normal University,\newline\indent Guiyang 550025,\newline\indent
People's Republic of China
\newline\indent E-mail:zpwzpw2012@126.com \;(Wang) \\ \newline
\newline\indent Department of
Mathematics,\newline\indent Texas A $\&$ M University-Commerce,
\newline\indent Commerce TX 75429,\newline\indent USA.\newline\indent
E-mail:yelin$\_$ou@tamu-commerce.edu \;(Ou).}
\thanks{*Supported by the Natural Science Foundation of China (No. 11861022). \\
\indent** Supported by a grant from the Simons Foundation ( 427231,
Ye-Lin Ou).}
\begin{document}
\title[Biharmonic  Riemannian submersions ] {Biharmonic  Riemannian submersions  from
 the product space  $M^2\times\r$}
\date {2/24/2024} \subjclass{58E20, 53C12, 53C42} \keywords{ Biharmonic maps, biharmonic Riemannian submersions,  product spaces.}
\maketitle

\section*{Abstract}
\begin{quote}
{\footnotesize In this paper,  we study  biharmonic Riemannian submersions $\pi:M^2\times\r\to (N^2,h)$ from a product manifold onto a surface and obtain some local characterizations of such biharmonic maps. Our results show that when the target surface is flat, a proper biharmonic Riemannian submersion $\pi:M^2\times\r\to (N^2,h)$ is locally a projection of a special twisted product, and when the target surface is non-flat, $\pi$ is locally a  special map between two warped product spaces with a warping function that solves a single ODE. As a by-product, we also prove that there is a unique proper biharmonic Riemannian submersion $H^2\times \r\to \r^2$ given by the projection of a warped product.}
\end{quote}
\section{Introduction and preliminaries}

A {\bf harmonic  map}  is a  map  $\varphi:(M, g)\to (N,
h)$ between Riemannian manifolds whose tension filed  vanishes identically, i.e., $\tau(\varphi)={\rm
Trace}_{g}\nabla {\rm d} \varphi\equiv 0$. A {\bf biharmonic map} is one whose bitension filed solves the PDEs
\begin{equation}\label{BT1}
\tau_{2}(\varphi):={\rm
Trace}_{g}(\nabla^{\varphi}\nabla^{\varphi}-\nabla^{\varphi}_{\nabla^{M}})\tau(\varphi)
- {\rm Trace}_{g} R^{N}({\rm d}\varphi, \tau(\varphi)){\rm d}\varphi
=0,
\end{equation}
where $R^{N}$ is the curvature operator of $(N, h)$ defined by
$$R^{N}(X,Y)Z=
[\nabla^{N}_{X},\nabla^{N}_{Y}]Z-\nabla^{N}_{[X,Y]}Z.$$
Clearly, any harmonic map is a biharmonic map. A biharmonic map which is not harmonic is called a {\bf proper biharmonic map}.\\

 The geometric study of biharmonic maps focuses on biharmonicity of maps with geometric interest like isometric immersions or Riemannian submersions, and the geometry and topology of  the spaces related to the existence of such geometrically special biharmonic maps. For example, in the study of biharmonic submanifolds (biharmonic isometric immersions), a fundamental problem is to classify biharmonic submanifolds in certain model spaces. Although many related progresses have been made the following conjectures remain open in general cases.\\
 
{\bf Chen's Conjecture (\cite{CH}):} Any biharmonic submanifolds in Euclidean space is minimal.\\
 
\noindent {\bf Conjecture (Balmus-Montaldo-Oniciuc \cite{BMO1}):} A biharmonic subman-
ifold in sphere has constant mean curvature; and any proper biharmonic hypersurface in $S^{m+1}$ is an open part of $S^{m}(\frac{1}{\sqrt{2}})$ or the generalized Clifford 
torus $S^{p}(\frac{1}{\sqrt{2}})\times S^{q}(\frac{1}{\sqrt{2}})$ with $p+q=m, p\ne q$.\\

For more detailed information and some recent progress on biharmonic submanifolds see the recent book \cite{Ou7} and the vast references therein.

 Riemannian submersions are a dual concept of  isometric immersions (i.e., submanifolds). The study of biharmonicity of Riemannian submersions was initiated in \cite{Oni}. A useful tool of using the so-called integrability data to study  biharmonic Riemannian submersion from a  generic 3-manifold was introduced  in \cite{WO}. This was later generalized to higher dimensions with one dimensional fibers in \cite{AO}. Complete classifications of Riemannian submersions from a 3-dimensional space forms and  more general BCV  spaces  into a surface were obtained in  \cite{WO,WO1}. 
 
In this paper, we study  biharmonic submersions from the product space $M^2\times\r$, where $M^2$ is a general 2-dimensional manifold.  A reason for the choice of the product space $M^2\times\r$ is that it includes  many well-known model spaces, such as $S^2\times\r$,\; $H^2\times\r$,\; $\r^3$,\; and twisted spaces $\r^2\times_{e^{2p}}\r=(\r^3,dy^2+dz^2+e^{2p(x,y)}dx^2)$.  We give   some local characterizations of biharmonic (including harmonic) Riemannians submersions  from  $M^2\times\r$ onto a surface.  These include: a Riemannian submersion $\pi:M^2\times\r\to (N^2,h)$  is harmonic if and only if it is locally the projection onto the first factor followed by a Riemannian covering map (Theorem \ref{HTh}); A Riemannian submersion $\pi:M^2\times\r\to \r^2$ is  proper biharmonic, then it is locally a projection of a special twisted product (Theorem \ref{Ths1}), and if a Riemannian submersion $\pi:M^2\times\r\to  (N^2,h)$ into a non-flat surface is  proper biharmonic, then $\pi$ is locally a  special map between two warped product spaces with a warping function that solves a single ODE (Corollary \ref{co1}).

\section{ Harmonic and biharmonic Riemannian submersions from $M^2\times\r$}

We will use  the following useful local orthonormal frame in the paper.
\begin{lemma}\label{2.1}
For any point on $M^2\times\r$, there is a neighborhood $U\times \r$ and local coordinates $(t, s, z)$ such that the product metric on $M^2\times\r$ takes the form
$e^{2q(t,s)}dt^2+ds^2+dz^2$. Furthermore, the orthonormal frame
$\{ E_1=e^{-q(t,s)}\partial_t,\;E_2=\partial_s, E_3=\partial_z\}$  satisfies
\begin{equation}\label{g1}
\begin{array}{lll}
[E_1,E_2]=fE_{1},\;\; {\rm all\;\;
other}\;\;[E_i,E_j]=0,\;\;i,j=1, 2, 3,\\
\nabla_{E_{1}}E_{1}=-fE_{2},\;\nabla_{E_{1}}E_{2}=fE_{1},\;
{\rm all\;other}\;\nabla_{E_{i}}E_{j}=0,\;i,\;j=1,2,3,
\end{array}
\end{equation}
where  $f=q_s$, and the only possible
nonzero components of the (Ricci) curvatures of $M^2\times\r$ are given by
\begin{equation}\label{g}
\begin{array}{lll}
{\rm Ric}\, (E_{1},E_{1})={\rm Ric}(E_{2},E_{2})= R_{1212}=g(R(E_{1},E_{2})E_{2},E_{1})=-E_2(f)-f^2,
\end{array}
\end{equation}
which is the Gauss curvature of $M^2$ on $U$ denoted by $K^{M^2}=-E_2(f)-f^2$.
\end{lemma}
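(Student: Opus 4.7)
The plan is to reduce everything to a careful choice of local coordinates on $M^2$, after which each identity follows by a short direct computation. On a neighborhood of any given point of $M^2$, I would construct semi-geodesic (Fermi) coordinates $(t,s)$: pick a smooth curve $\gamma(t)$ through the point, let $N(t)$ be a unit normal to $\dot\gamma$ along $\gamma$, and set $\Phi(t,s)=\exp_{\gamma(t)}(sN(t))$. By the Gauss lemma the pulled-back metric carries no cross term and has $\partial_s$ as a unit vector, so it takes the form $G(t,s)\,dt^2+ds^2$ with $G>0$; writing $G=e^{2q(t,s)}$ and crossing with the Euclidean $\r$-factor yields the product metric $e^{2q(t,s)}dt^2+ds^2+dz^2$ on $M^2\times\r$.

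Orthonormality of $\{E_1,E_2,E_3\}$ is then immediate. Using the identity $[fX,Y]=f[X,Y]-Y(f)X$ and the fact that $q$ is independent of $z$, all Lie brackets involving $E_3$ vanish and
\[[E_1,E_2]=[e^{-q}\partial_t,\partial_s]=-\partial_s(e^{-q})\,\partial_t=q_s\,e^{-q}\partial_t=f E_1,\]
with $f=q_s$, proving the first line of (\ref{g1}).

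For the connection I would apply the Koszul formula, which on an orthonormal frame reduces to
\[2\langle \nabla_{E_i}E_j,E_k\rangle = \langle[E_i,E_j],E_k\rangle - \langle[E_j,E_k],E_i\rangle - \langle[E_i,E_k],E_j\rangle\]
because the terms $E_i\langle E_j,E_k\rangle$ all vanish. Only $[E_1,E_2]=fE_1$ contributes, and a case-by-case reading yields $\nabla_{E_1}E_1=-fE_2$ and $\nabla_{E_1}E_2=fE_1$; the remaining identities on $M^2$-directions follow at once (e.g.\ $\nabla_{E_2}E_1=\nabla_{E_1}E_2-[E_1,E_2]=0$), while every derivative involving $E_3$ vanishes by the Riemannian product structure, since $E_3=\partial_z$ is parallel on a product.

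Finally, expanding $R(E_1,E_2)E_2=\nabla_{E_1}\nabla_{E_2}E_2-\nabla_{E_2}\nabla_{E_1}E_2-\nabla_{[E_1,E_2]}E_2$ with the formulas just obtained gives $R(E_1,E_2)E_2=-(E_2(f)+f^2)E_1$, hence $R_{1212}=-E_2(f)-f^2$. Because $E_3$ is parallel, every sectional curvature with an $E_3$-slot vanishes, so the only nonzero Ricci components are $\mathrm{Ric}(E_1,E_1)=\mathrm{Ric}(E_2,E_2)=R_{1212}$. Equality with the Gauss curvature $K^{M^2}$ of $U$ is automatic because the slice $M^2\times\{z\}$ is totally geodesic and inherits exactly the metric $e^{2q}dt^2+ds^2$. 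None of the steps is genuinely hard; the main obstacle is purely bookkeeping---keeping track of signs in the Koszul formula and in $\partial_s(e^{-q})=-q_s e^{-q}$, together with the mild nuisance of verifying that Fermi coordinates exist on a uniform neighborhood so that everything is defined simultaneously.
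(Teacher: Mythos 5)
Your proposal is correct and follows essentially the same route as the paper, which simply invokes the well-known existence of semi-geodesic coordinates on $M^2$ and leaves the frame, connection, and curvature identities as a routine check. Your write-up merely supplies the details (Fermi coordinates via the generalized Gauss lemma, the Koszul formula on an orthonormal frame, and the direct curvature computation), all of which check out, including the signs giving $R_{1212}=-E_2(f)-f^2$.
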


\begin{proof} It is well known that around any point of a 2-dimensional Riemannian manifold $(M^2,g)$ there exists a semi-geodesic coordinate system so that locally it can be described as $(U\subseteq M^2, g=e^{2q(t,s)}dt^2+ds^2)$.
One can easily checked that $\{ E_1=e^{-q(t,s)}\partial_t,\;E_2=\partial_s, E_3=\partial_z\}$ is an orthonormal frame satisfies  (\ref{g1}) and (\ref{g}) with $f=q_s$.
\end{proof}

\begin{remark}
Note that the Lemma \ref{2.1}  actually implies that any local semi-geodesic coordinate system gives rise to an orthonormal frame $\{E_i\}$ with the properties (\ref{g1}) and (\ref{g}). For latter reference we call such an orthonormal frame {\em semi-geodesic orthonormal frame}.
\end{remark}

Let $\pi:M^2\times\r \to (N^2,h)$ be a Riemannian
submersion, and let   $\{e_1,\; e_2, \;e_3\}$ be a local orthonormal frame on $W\subset M^2\times\r $  with $e_3$ vertical.  Let $\{f_{1},\;f_2, f_3, \kappa_1,\;\kappa_2,\; \sigma\}$ be the generalized  integrability associated to this frame. Note that $f_{3}=0$ if and only if  the above frame is  adapted  to $\pi$.

Using the relation $e_i=\sum\limits_{j=1}^3a_{i}^{j}E_j$, as it was computed in \cite{WO1}, we have the following  Lie  brackets and the components of the Levi-Civita connection with respect to this frame as
\begin{equation}\label{R2}
\begin{array}{lll}
[e_1,e_3]=f_{3}e_2+\kappa_1e_3,\;
[e_2,e_3]=-f_{3}e_1+\kappa_2e_3,\;
[e_1,e_2]=f_1 e_1+f_2e_2-2\sigma e_3,\\
\nabla_{e_{1}} e_{1}=-f_1e_2,\;\;\nabla_{e_{1}} e_{2}=f_1
e_1-\sigma e_{3},\;\;\nabla_{e_{1}} e_{3}=\sigma
e_{2},\\ \nabla_{e_{2}} e_{1}=-f_2 e_{2}+\sigma
e_3,\;\;\nabla_{e_{2}} e_{2}=f_2 e_{1}, \;\;\nabla_{e_{2}}
e_{3}=-\sigma e_{1},\\ \nabla_{e_{3}}
e_{1}=-\kappa_1e_{3}+(\sigma-f_3) e_{2}, \nabla_{e_{3}} e_{2}= -(\sigma-f_3)
e_{1}-\kappa_2 e_3, \nabla_{e_{3}} e_{3}=\kappa_1 e_{1}+\kappa_2e_2,
\end{array}
\end{equation}
and the only possible nonzero components of the Riemannian curvature $R$ of $M^2\times \r$ as
\begin{equation}\label{RC0}
\begin{cases}
R (e_1,e_3,e_1,e_2)=-e_1(\sigma)+2\kappa_1\sigma=-a_{2}^{3}a_{3}^{3}K^{M^2},\\
R (e_1,e_3,e_1,e_3)=e_1(\kappa_1)+\sigma^2-\kappa_{1}^2+\kappa_2f_1=(a_{2}^{3})^2K^{M^2},\;\\
R (e_1,e_3,e_2,e_3)=e_1(\kappa_2)-e_3(\sigma)-\kappa_{1}f_{1}-\kappa_1\kappa_2=-a_{1}^{3}a_{2}^{3}K^{M^2},\;\\
R (e_1,e_2,e_1,e_2)=e_1(f_2)-e_2(f_1)-f_{1}^{2}-f_{2}^{2}+2f_{3}\sigma-3\sigma^2=(a_{3}^{3})^2K^{M^2},\\
R (e_1,e_2,e_2,e_3)=-e_2(\sigma)+2\kappa_2\sigma=a_{1}^{3}a_{3}^{3}K^{M^2},\\
R (e_2,e_3,e_1,e_3)=e_2(\kappa_{1})+e_3(\sigma)+\kappa_2 f_2-\kappa_1 \kappa_2=-a_{1}^{3}a_{2}^{3}K^{M^2},\\
R (e_2,e_3,e_2,e_3)=\sigma^{2}+e_2(\kappa_2)-\kappa_1f_2- \kappa_2^2=(a_{1}^{3})^2K^{M^2}.
\end{cases}
\end{equation}
Finally, the Gauss curvature of the base space is given by
\begin{equation}\label{GCB}
K^N=e_1(f_2)-e_2(f_1)-f_1^2-f_2^2+2f_{3}\sigma.
\end{equation}
\begin{remark}
From (\ref{R2}) we see that $\sigma=0$ on  the neighborhood $W$ if and only if the horizontal distribution of the Riemannian submersion $\pi$ on $W$ is integrable.
\end{remark}

\subsection{ Harmonic  Riemannian submersions from  $M^2\times\r$}

Harmonic Riemannian submersions $M^3\to N^2$ are a special subclass of horizontally homothetic harmonic morphisms with totally geodesic fibers. For general harmonic morphisms and their applications and interesting links to other areas of mathematics see the book \cite{BW1}.  It is well known that the only Riemannian submersion from $S^3$ is the Hopf fibration $S^3\to S^2$ which is harmonic since the fibers are totally geodesic. Using the Bernstein theorem for harmonic morphism (see Baird-Wood \cite{BW2}), one can easily deduce (see also \cite{WO}) that there is no harmonic Riemannian submersion  $H^3\to (N^2, h)$ from a 3-dimensional hyperbolic space form, and that any globally defined harmonic Riemannian submersion $\phi: \r^3 \to (N^2, h)$ is an orthogonal projection $\r^3\to\r^2$ followed by a Riemannian covering map $\r^2\to (N^2,h)$. For some recent work on  the classifications of harmonic Riemannian submersions from Thurston's 3-dimensional geometries, BCV 3-spaces, and Berger 3-spheres see   \cite{WO4}.  In this subsection, we study harmonic Riemannian submersions from $M^2\times\r$.

\begin{theorem}\label{HTh}
A Riemannian submersion $\pi:M^2\times\r\to (N^2,h)$  is harmonic if and only if  locally it is, up to an isometry of the domain, the projection onto the first factor followed by a Riemannian covering map.
\end{theorem}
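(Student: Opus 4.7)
The ``if'' direction is immediate: if $\pi$ is locally the projection $(P,z)\mapsto P$ followed by a Riemannian covering, its fibers are the vertical lines $\{P\}\times\r$, which are geodesics of $M^2\times\r$ and hence totally geodesic; so $\pi$ has vanishing tension field and is harmonic. I concentrate on the ``only if'' direction, working in the neighborhood $W\subset M^2\times\r$ with the adapted frame $\{e_1,e_2,e_3\}$ (with $e_3$ vertical) and the integrability data $\{f_1,f_2,f_3,\kappa_1,\kappa_2,\sigma\}$ introduced above. A Riemannian submersion is harmonic precisely when its fibers are minimal, and since the fibers here are one-dimensional this is equivalent to the fibers being geodesics, i.e., $\nabla_{e_3}e_3=0$. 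By (\ref{R2}) this reads $\kappa_1\equiv\kappa_2\equiv 0$ on $W$.

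The key step, which I expect to be the main technical obstacle, is to use the curvature identities (\ref{RC0}) to force $\sigma\equiv 0$ as well. Plugging $\kappa_1=\kappa_2=0$ into (\ref{RC0}), the third and sixth lines reduce to $-e_3(\sigma)=-a_1^3 a_2^3 K^{M^2}$ and $e_3(\sigma)=-a_1^3 a_2^3 K^{M^2}$, so $a_1^3 a_2^3 K^{M^2}=0$ and $e_3(\sigma)=0$. The second and seventh lines reduce to $\sigma^2=(a_2^3)^2 K^{M^2}$ and $\sigma^2=(a_1^3)^2 K^{M^2}$, respectively. A pointwise case split then finishes the job: where $K^{M^2}=0$ these two identities give $\sigma^2=0$ directly; where $K^{M^2}\ne 0$, the relation $a_1^3 a_2^3=0$ forces $a_1^3=0$ or $a_2^3=0$, and again $\sigma^2=0$. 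Hence $\sigma\equiv 0$ on $W$.

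With $\kappa_1=\kappa_2=\sigma=0$, formulas (\ref{R2}) give $\nabla_{e_1}e_3=\nabla_{e_2}e_3=\nabla_{e_3}e_3=0$, so $e_3$ is a globally parallel unit vector field on $W$. I then invoke the rigidity of parallel vector fields on a Riemannian product: any parallel vector field on $M^2\times\r$ decomposes as a parallel vector field on $M^2$ plus a constant multiple of $\partial_z$, and a nonzero parallel field on a surface forces that surface to be flat via trivial holonomy. Thus either $M^2$ is not flat on the base of $W$, in which case $e_3=\pm\partial_z$ directly; or $M^2$ is flat on $W$, in which case $W$ is locally isometric to an open subset of $\r^3$ and a Euclidean rotation of the domain sends $e_3$ to $\partial_z$. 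In either case, after an isometry of the domain, the fibers of $\pi$ are exactly the vertical lines $\{P\}\times\r$, so $\pi=\phi\circ\pi_1$ where $\pi_1:M^2\times\r\to M^2$ is the first-factor projection and $\phi:M^2\to(N^2,h)$ is a Riemannian submersion between surfaces of the same dimension, hence a local isometry, i.e., a Riemannian covering onto its image, as claimed.
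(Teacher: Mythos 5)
Your proof is correct, and its core coincides with the paper's: both arguments reduce harmonicity to $\kappa_1=\kappa_2=0$ (minimal, hence geodesic, fibers), substitute this into the curvature identities (\ref{RC0}) to obtain $\sigma^2=(a_1^3)^2K^{M^2}=(a_2^3)^2K^{M^2}$ and $a_1^3a_2^3K^{M^2}=0$, and then split on $K^{M^2}$ to force $\sigma\equiv 0$. Two points of difference are worth recording. First, your case split is pointwise, whereas the paper argues ``by continuity'' that one may assume $K^{M^2}\equiv 0$ or $K^{M^2}\neq 0$ on a whole neighborhood; your version is cleaner, since it avoids any issue at points on the boundary of the zero set of $K^{M^2}$. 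Second, the endgame differs: the paper concludes from $\sigma=0$ that the horizontal distribution is integrable and, combined with totally geodesic fibers, identifies $\pi$ with the projection along the fibers followed by a covering; you instead observe from (\ref{R2}) that $\kappa_1=\kappa_2=\sigma=0$ makes $e_3$ parallel, and then use the holonomy splitting of a parallel field on a product ($e_3=X+c\,\partial_z$ with $X$ parallel and tangent to $M^2$, so either $X\equiv 0$ and $e_3=\pm\partial_z$, or $X$ is nowhere zero and $M^2$ is flat, in which case a Euclidean isometry straightens $e_3$ to $\partial_z$). Your finish is somewhat more explicit about why the fibers can be taken to be the $\r$-factor lines ``up to an isometry of the domain'' in the flat case, a point the paper's proof passes over quickly; both routes are valid and of comparable length.
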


\begin{proof}
By Lemma \ref{2.1}, for any point on $M^2\times\r$, there is a neighborhood $W=U\times \r$ on which we have an orthonormal frame $\{E_i\}$. Let $e_i=\sum\limits_{j=1}^3a_{i}^{j}E_j$ be a local orthonormal frame with $e_3$ vertical.
Using Proposition 2.2 in \cite{WO4} and (\ref{RC0}),  we conclude that if the  Riemannian submersion $\pi:M^2\times\r\supseteq W\to (N^2,h)$ is harmonic, then we have
\begin{equation}\label{RCH}
\sigma^2=(a_{2}^{3})^2K^{M^2}=(a_{1}^{3})^2K^{M^2},\;
a_{1}^{3}a_{2}^{3}K^{M^2}=0,\;{\rm and}\;
K^N=3\sigma^2+(a_{3}^{3})^2K^{M^2}.
\end{equation}
By continuity of $K^{M^2}$, we may assume that either $K^{M^2}\equiv 0$ or $K^{M^2}\ne 0$ on a neighborhood denoted by $W$ by an abuse of notation.  For the first case, (\ref{RCH}) implies that $\sigma=0$, and $K^{N}=K^{M^2}=0$ on $W$. For the second case, we use $K^{M^2}\ne 0$ on $W$ together with the 1st and the 2nd equation of (\ref{RCH}) to conclude that $a_{1}^{3}=a_{2}^{3}=0$ and hence $(a_3^3)^2=1$, which also implies that  $\sigma=0$ and hence the Gauss curvature of the base space  $K^N=\sigma^2+(a_{3}^{3})^2K^{M^2}=(a_{3}^{3})^2K^{M^2}=K^{M^2}$. Thus, in either case, we have  $\sigma =0$ on a neighborhood $W$ which means the Riemannian submersion $\pi:M^2\times\r\supseteq W\to (N^2,h)$  has integrable horizontal distribution.  This, together with  the assumption that $\pi |_W$ is harmonic and hence it has totally geodesic fibers, implies that  $\pi |_{W}$ is a Riemannian submersion  with totally geodesic fibers and  integrable horizontal distribution whose integral submanifold is an open neighborhood $U$ of $M^2$ that is isometric to an open neighborhood of $(N^2, h)$. Therefore,  $\pi |_W$  is the projection along the fibers onto $U$ followed by a Riemannian covering map onto $V\subset N^2$.
\end{proof}

\subsection{ Biharmonic  Riemannian submersions from  $M^2\times\r$}

In this subsection, we will characterize all proper biharmonic  Riemannian submersions from $M^2\times\r$, which are not harmonic.\\
We will use the following lemma  in the rest of the paper.
\begin{lemma}(\cite{WO})\label{Lem1}
Let $\pi:(M^3,g)\to (N^2,h)$ be a Riemannian submersion
with an adapted frame $\{e_1,\; e_2, \;e_3\}$ and the integrability
data $ f_1, f_2, \kappa_1,\;\kappa_2\;{\rm and}\; \sigma$. Then, the
Riemannian submersion $\pi$ is biharmonic if and only if
\begin{equation}\label{lem1}
\begin{cases}
-\Delta^{M}\kappa_1-2\sum\limits_{i=1}^{2}f_i e_i(\kappa_2)-\kappa_2\sum\limits_{i=1}^{2}\left(e_i( f_i)
-\kappa_i f_i\right)+\kappa_1\left(-K^{N}+\sum\limits_{i=1}^{2}f_{i}^{2}\right)
=0,\\
-\Delta^{M}\kappa_2+2\sum\limits_{i=1}^{2}f_i e_i(\kappa_1)+\kappa_1\sum\limits_{i=1}^{2}(e_i( f_i)
-\kappa_i f_i)+\kappa_2\left(-K^{N}+\sum\limits_{i=1}^{2}f_{i}^{2}\right)=0,
\end{cases}
\end{equation}
where
$K^{N}=R^{N}_{1212}\circ\pi=e_1(f_2)-e_2(f_1)-f_{1}^{2}-f_{2}^{2}$\; is  the Gauss curvature of $(N^2,h)$.
\end{lemma}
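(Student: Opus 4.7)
The plan is to compute the bitension field $\tau_2(\pi)$ directly from (\ref{BT1}) in the adapted frame $\{e_1,e_2,e_3\}$ and read off its two components in the pushdown orthonormal frame $\bar e_i:=d\pi(e_i)$, $i=1,2$, on $(N^2,h)$. Since $\pi$ is a Riemannian submersion with one-dimensional fibres spanned by $e_3$ and the integrability data contains no $f_3$ (so $e_1,e_2$ can be chosen as basic horizontal lifts), the $i=1,2$ contributions to the trace of $\nabla d\pi$ cancel, and (\ref{R2}) yields
\begin{equation*}
\tau(\pi)=-d\pi(\nabla^M_{e_3}e_3)=-\kappa_1\bar e_1-\kappa_2\bar e_2.
\end{equation*}

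Next, I would record the pull-back connection on the $\bar e_j$: $\nabla^\pi_{e_3}\bar e_j=0$ since $d\pi(e_3)=0$, and $\nabla^\pi_{e_i}\bar e_j=\nabla^N_{\bar e_i}\bar e_j$ for $i\in\{1,2\}$. Pushing $[e_1,e_2]=f_1e_1+f_2e_2-2\sigma e_3$ from (\ref{R2}) down to $N$ gives $[\bar e_1,\bar e_2]=f_1\bar e_1+f_2\bar e_2$, and the Koszul formula in the two-dimensional orthonormal frame produces
\begin{equation*}
\nabla^N_{\bar e_1}\bar e_1=-f_1\bar e_2,\;\;\nabla^N_{\bar e_1}\bar e_2=f_1\bar e_1,\;\;\nabla^N_{\bar e_2}\bar e_1=-f_2\bar e_2,\;\;\nabla^N_{\bar e_2}\bar e_2=f_2\bar e_1.
\end{equation*}
The O'Neill invariant $\sigma$, confined to the vertical part of $\nabla^M_{e_i}e_j$, is killed by $d\pi$ at this stage, which is why it never appears in (\ref{lem1}). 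For the curvature contribution, $\dim N=2$ forces $R^N(X,Y)Z=K^N(h(Y,Z)X-h(X,Z)Y)$, and a direct check gives $\sum_{i=1}^{2}R^N(\bar e_i,\tau(\pi))\bar e_i=-K^N\tau(\pi)=K^N(\kappa_1\bar e_1+\kappa_2\bar e_2)$.

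In the final step I would substitute these data into (\ref{BT1}) and expand $\sum_{i=1}^{3}[\nabla^\pi_{e_i}\nabla^\pi_{e_i}\tau(\pi)-\nabla^\pi_{\nabla^M_{e_i}e_i}\tau(\pi)]$ term by term via the Leibniz rule on $\tau(\pi)=-\kappa_1\bar e_1-\kappa_2\bar e_2$, using also $\nabla^M_{e_1}e_1=-f_1e_2$, $\nabla^M_{e_2}e_2=f_2e_1$, $\nabla^M_{e_3}e_3=\kappa_1e_1+\kappa_2e_2$ from (\ref{R2}). In the $\bar e_1$-coefficient, the pure second-derivative terms in $\kappa_1$ pair with the $(\nabla^M_{e_i}e_i)\kappa_1$-corrections to give $-\Delta^M\kappa_1$ under the convention $\Delta^M\phi=\sum_i[e_i(e_i\phi)-(\nabla^M_{e_i}e_i)\phi]$; the first-derivative terms in $\kappa_2$ group as $-2\sum_{i=1}^{2}f_ie_i(\kappa_2)$; the derivatives of $f_i$ together with the $\kappa_if_i$ pieces produced by the $i=3$ correction yield $-\kappa_2\sum_{i=1}^{2}(e_i(f_i)-\kappa_if_i)$; and the $f_i^2$-terms combined with the $-K^N\kappa_1$ contribution from the curvature step give $\kappa_1(-K^N+\sum_{i=1}^{2}f_i^2)$, recovering the first equation of (\ref{lem1}). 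The $\bar e_2$-equation follows by the symmetric computation (effectively $\kappa_1\leftrightarrow\kappa_2$ with the sign changes dictated by the skew-symmetry of $\nabla^N$ in the orthonormal frame). The main obstacle is precisely this bookkeeping: it is not a priori obvious which first-derivative cross terms in $\kappa_i,f_i$ are absorbed into $\Delta^M\kappa_j$ and which survive, and the clean form of (\ref{lem1}) emerges only after carefully pairing the $i=3$ correction $\nabla^\pi_{\kappa_1e_1+\kappa_2e_2}\tau(\pi)$ against the horizontal first derivatives of $\kappa_1,\kappa_2$.
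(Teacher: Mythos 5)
Your computation is correct and is essentially the standard derivation: the paper states this lemma without proof as a quotation from \cite{WO}, and the argument there is exactly your direct evaluation of the bitension field in the adapted frame, using $\tau(\pi)=-\kappa_1\bar e_1-\kappa_2\bar e_2$, the pushed-down connection $\nabla^N_{\bar e_i}\bar e_j$ determined by $[\bar e_1,\bar e_2]=f_1\bar e_1+f_2\bar e_2$, the vanishing of $\nabla^\pi_{e_3}\bar e_j$, and $\sum_{i=1}^2 R^N(\bar e_i,\tau)\bar e_i=-K^N\tau$. The grouping you describe (second derivatives plus the $\nabla_{e_i}e_i$ corrections forming $-\Delta^M\kappa_j$, with the $i=3$ correction $\nabla^\pi_{\kappa_1e_1+\kappa_2e_2}\tau$ supplying the $\kappa_if_i$ terms) does reproduce (\ref{lem1}) exactly.
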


\begin{lemma}(see \cite{WO1}) \label{L1}
Let $\pi:(M^3,g)\to (N^2,h)$ be a Riemannian submersion
from  Riemannian 3-manifolds  and  $\{e_1, e_2, e_3\}$ be  any local orthonormal frame with
$ e_3$ tangent to the fibers. If $\nabla_{e_{1}}e_{1}=0$, then either $\nabla_{e_{2}}e_{2}=0$;\;or\;$\nabla_{e_{2}}e_{2}\not\equiv0$, and  the
frame $\{e_1, e_2, e_3\}$  is  adapted  to the Riemannian submersion $\pi$.
\end{lemma}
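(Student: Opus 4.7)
My plan is to recast the hypothesis and conclusion in terms of the structure functions of (\ref{R2}). From $\nabla_{e_1}e_1=-f_1 e_2$, the assumption $\nabla_{e_1}e_1=0$ is equivalent to $f_1\equiv 0$ on the domain of the frame; similarly $\nabla_{e_2}e_2=f_2 e_1$, so the dichotomy $\nabla_{e_2}e_2\equiv 0$ versus $\nabla_{e_2}e_2\not\equiv 0$ is just $f_2\equiv 0$ versus $f_2\not\equiv 0$. The first case is trivial; the real content of the lemma is therefore the implication that $f_1\equiv 0$ together with $f_2\not\equiv 0$ forces $f_3\equiv 0$, which by the paragraph preceding Lemma~\ref{Lem1} is exactly the statement that the frame is adapted to $\pi$.

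Assuming $f_1\equiv 0$ and picking an open set $U$ on which $f_2\ne 0$ by continuity, my main computational input will be the Jacobi identity
\[
[e_1,[e_2,e_3]]+[e_2,[e_3,e_1]]+[e_3,[e_1,e_2]]=0,
\]
applied with the commutators of (\ref{R2}) simplified by $f_1=0$. Reading off the $e_1$- and $e_2$-coefficients yields
\[
e_1(f_3)=f_3(\kappa_1+f_2),\qquad e_2(f_3)=\kappa_2 f_3+e_3(f_2).
\]
The first is a homogeneous linear ODE for $f_3$ along $e_1$-integral curves, so a single zero of $f_3$ on such a curve propagates to the whole curve and, by connectedness, from $U$ to the rest of the neighborhood.

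To locate that zero I would pass to a local adapted frame $\{e_1^{\rm ad},e_2^{\rm ad},e_3\}$, which exists because $\pi$ is a Riemannian submersion, and write the given frame as a rotation by an angle $\theta$ in the horizontal plane. A direct computation gives $f_3=-e_3(\theta)$ together with
\[
f_1=f_1^{\rm ad}\cos\theta+f_2^{\rm ad}\sin\theta-e_1(\theta),\quad f_2=-f_1^{\rm ad}\sin\theta+f_2^{\rm ad}\cos\theta-e_2(\theta).
\]
Since $\pi$ is a Riemannian submersion, $f_1^{\rm ad}$ and $f_2^{\rm ad}$ are basic functions, so $e_3(f_i^{\rm ad})=0$. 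Differentiating $f_1\equiv 0$ along $e_3$, using this basicness, and then inverting the resulting linear relation with the help of $f_2\ne 0$ on $U$, I would extract $e_3(\theta)\equiv 0$ and hence $f_3\equiv 0$ on $U$.

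The hard part is precisely this final differentiation step: on the locus $\{f_2\ne 0\}$ the Jacobi relations by themselves still admit nonzero solutions for $f_3$, so the cancellation that forces $e_3(\theta)=0$ has to come from the basicness of $f_1^{\rm ad},f_2^{\rm ad}$, which is where the Riemannian-submersion hypothesis (as opposed to a bare one-dimensional foliation) enters in an essential way. Once $f_3\equiv 0$ on $U$ is established, the ODE in the $e_1$-direction extends the vanishing of $f_3$ throughout the neighborhood and thereby finishes the proof.
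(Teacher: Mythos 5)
The paper never proves this lemma---it is only quoted from \cite{WO1}---so your attempt has to stand on its own, and it does not close. Your reduction to ``$f_1\equiv 0$ and $f_2\not\equiv 0$ imply $f_3\equiv 0$'' is right, and both Jacobi relations $e_1(f_3)=(\kappa_1+f_2)f_3$ and $e_2(f_3)=\kappa_2 f_3+e_3(f_2)$ check out. But the step you yourself flag as ``the hard part'' provably yields nothing: if you actually differentiate $f_1=\cos\theta\,f_1^{\rm ad}+\sin\theta\,f_2^{\rm ad}-e_1(\theta)\equiv 0$ along $e_3$, use $e_3(f_i^{\rm ad})=0$, substitute $f_3=-e_3(\theta)$ and $-\sin\theta\,f_1^{\rm ad}+\cos\theta\,f_2^{\rm ad}=f_2+e_2(\theta)$, and commute via $e_3e_1(\theta)=e_1e_3(\theta)-[e_1,e_3](\theta)$, every term cancels down to $e_1(f_3)=(\kappa_1+f_2)f_3$ --- exactly the Jacobi identity you already had. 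So the basicness of $f_1^{\rm ad},f_2^{\rm ad}$ injects no new constraint, there is no linear relation to ``invert'' using $f_2\ne 0$, and the homogeneous ODE for $f_3$ along $e_1$ has plenty of nonvanishing solutions. (Your propagation step is also shaky: vanishing of $f_3$ on $\{f_2\ne0\}$ does not spread to the whole neighborhood ``by connectedness''.)

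Worse, no argument of this shape can succeed, because the statement is false at the generality at which it is quoted here. Take $\pi:\r^3\to\r^2$, $\pi(x,y,z)=(x,y)$, with the flat metric, let $\theta$ be the polar angle of $(x-z,\,y)$ and $r=\sqrt{(x-z)^2+y^2}$, and set $e_1=\cos\theta\,\partial_x+\sin\theta\,\partial_y$, $e_2=-\sin\theta\,\partial_x+\cos\theta\,\partial_y$, $e_3=\partial_z$. Then $e_1(\theta)=0$, so $\nabla_{e_1}e_1=e_1(\theta)\,e_2=0$ and $f_1=0$; $f_2=-e_2(\theta)=-1/r$ is nowhere zero; yet $f_3=-\theta_z=-\sin\theta/r\not\equiv 0$, so the frame is not adapted, and one checks that all relations of (\ref{R2}) and the Jacobi identities hold. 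The lemma can therefore only be true with extra hypotheses implicit in \cite{WO1} or in the particular frame of Proposition \ref{Clac} --- and indeed, in this paper's actual use of the lemma (Theorem \ref{Cla}) the case $\nabla_{e_2}e_2=0$ is handled by invoking the additional identities (\ref{thb2}) coming from the product structure, not by the bare dichotomy. A proof that uses only ``Riemannian submersion, $e_3$ vertical, $\nabla_{e_1}e_1=0$'', as yours does, cannot work; the missing idea is whatever supplementary hypothesis \cite{WO1} imposes, and that needs to be identified before the gap can be filled.
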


The main tool used to prove our main theorems is a special orthonormal frame adapted to the Riemannian submersion. First we prove the existence of such frame.

\begin{proposition}\label{Clac}
Let $\pi: M^2\times\r\to (N^2,h)$ be a Riemannian submersion and $\{E_1,\; E_2,\; E_3=\frac{\partial}{\partial z}\}$ be a local semi-geodesic
  orthonormal  frame stated in Lemma \ref{2.1}.  Then,
there exists an orthonormal   frame \begin{equation}\label{GF}
\begin{cases}
 e_1=\cos\theta E_1+\sin\theta E_2, \\
 e_2=-\cos\alpha(\sin\theta E_1-\cos\theta E_2)+\sin\alpha E_3, \\
 e_3=\sin\alpha(\sin\theta E_1-\cos\theta E_2)+\cos\alpha E_3,
\end{cases}
\end{equation} such that  $e_3$ is vertical, $\nabla_{e_1}e_1=0$,  where $\cos\theta= \langle e_1, E_1\rangle,\; \cos \alpha=\langle e_3, E_3\rangle$.
Furthermore, the generalized integrability data of  $\{e_i\}$ are given by
 \begin{equation}\label{ida1}
\begin{array}{lll}
f_1=0,\;f_2=-\bar{f}\cos^2\alpha-\sin\alpha\cos\alpha E_3(\theta),\;f_3=-E_3(\theta),\\
\sigma=-\bar{f}\sin\alpha\cos\alpha-\sin^2\alpha E_3(\theta),\;
\kappa_1=-\bar{f}\sin^2\alpha+\sin\alpha\cos\alpha E_3(\theta),\\
\kappa_2=\sin\alpha e_3(\cos\alpha)-\cos\alpha e_3(\sin\alpha)=-e_3(\alpha),
\end{array}
\end{equation}
 where
 \begin{equation}\label{ida2}
\begin{array}{lll}
\bar{f}=-\sin \theta E_1(\theta)+\cos \theta E_2(\theta)+f \sin \theta.
\end{array}
\end{equation}
 \end{proposition}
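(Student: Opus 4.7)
The plan is to build $\{e_1,e_2,e_3\}$ explicitly from the vertical distribution of $\pi$, verify the two geometric conditions by a dimension-counting argument combined with (\ref{g1}), and then read off the integrability data (\ref{ida1}) by computing the three Lie brackets directly.

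First I would let $e_3$ be a local unit section of the vertical distribution of $\pi$ and expand it in the semi-geodesic frame as $e_3=a_3^1E_1+a_3^2E_2+a_3^3E_3$. Setting $\cos\alpha=a_3^3$ (so $\sin\alpha=\sqrt{(a_3^1)^2+(a_3^2)^2}\ge 0$), and, on the open set where $\sin\alpha\neq 0$, setting $\sin\theta=a_3^1/\sin\alpha$ and $\cos\theta=-a_3^2/\sin\alpha$, recovers the stated form of $e_3$. On any open set where $\sin\alpha\equiv 0$ (so $e_3=\pm E_3$) I would simply set $\theta\equiv\pi/2$. Define $e_1=\cos\theta\,E_1+\sin\theta\,E_2$ and $e_2=e_3\times e_1$. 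A direct check gives $\langle e_1,e_3\rangle=0$ (so $e_1$ is horizontal), produces the claimed expression for $e_2$ in (\ref{GF}), and confirms orthonormality.

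To prove $\nabla_{e_1}e_1=0$, a direct computation using (\ref{g1}) yields
\[
\nabla_{e_1}e_1=(e_1(\theta)-f\cos\theta)(-\sin\theta\,E_1+\cos\theta\,E_2)=(e_1(\theta)-f\cos\theta)(\cos\alpha\,e_2-\sin\alpha\,e_3),
\]
where the second equality uses the easy identity $-\sin\theta\,E_1+\cos\theta\,E_2=\cos\alpha\,e_2-\sin\alpha\,e_3$. Since $e_1$ is horizontal for the Riemannian submersion, antisymmetry of O'Neill's integrability tensor gives $(\nabla_{e_1}e_1)^V=A_{e_1}e_1=0$, forcing $(e_1(\theta)-f\cos\theta)\sin\alpha=0$. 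Where $\sin\alpha\neq 0$, this gives $e_1(\theta)=f\cos\theta$ and hence $\nabla_{e_1}e_1=0$; where $\sin\alpha\equiv 0$, our choice $\theta\equiv\pi/2$ gives $e_1=E_2$ with $\nabla_{E_2}E_2=0$ by (\ref{g1}).

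Finally, to derive (\ref{ida1}) I would compute the three Lie brackets $[e_1,e_2]$, $[e_1,e_3]$, $[e_2,e_3]$ by expanding each $e_i$ in the $E_j$-basis and applying the derivation property together with (\ref{g1}); matching the results against the defining identities (\ref{R2}) yields the six scalars $f_1,f_2,f_3,\kappa_1,\kappa_2,\sigma$ in the stated form. The auxiliary quantity $\bar f$ in (\ref{ida2}) enters each formula through the same horizontal/vertical decomposition $-\sin\theta\,E_1+\cos\theta\,E_2=\cos\alpha\,e_2-\sin\alpha\,e_3$. The main obstacle is bookkeeping rather than ideas: because $\alpha$ and $\theta$ depend on all three coordinates $(t,s,z)$, the Lie brackets produce many cross terms, and these must be systematically reorganized using trigonometric identities and the above decomposition before the clean formulas in (\ref{ida1}) emerge.
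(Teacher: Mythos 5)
Your proposal is correct and follows essentially the same route as the paper: you build the frame from $e_1\propto e_3\times E_3$ and $e_2=e_3\times e_1$, establish $\nabla_{e_1}e_1=0$ by showing its vertical part must vanish for a horizontal unit field (the paper extracts this same fact by comparing the $E_3$-coefficient in $\nabla_{e_1}e_1=-f_1e_2$ and using $a_2^3\neq 0$, which is the same O'Neill antisymmetry in disguise), and then read off the generalized integrability data from the Lie brackets, exactly as the paper does via its intermediate frame $\{\bar E_i\}$. The only cosmetic differences are a sign convention on $e_1$ and that you expand the brackets directly in the $E_j$-basis rather than through $\{\bar E_i\}$; neither affects the argument.
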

\begin{proof}
 Let $e_3$ be the unit vector field tangent to the fibers of $\pi$. Clearly, if  $ e_3=\pm E_3$,  then $e_1=E_2, e_2=-E_1, e_3=E_3$ is such an  orthonormal frame.  \\
Hereafter, we suppose that $ e_3\ne \pm E_3=\pm \frac{\partial}{\partial z}$.  In this case,  we take $e_1=\frac{e_3\times E_3}{|e_3\times E_3|}$ which  is  horizontal since $\langle e_1, e_3\rangle=0$, and obtain a natural orthonormal frame $\{e_1, \;e_2=e_3\times e_1,\;e_3\}$ on $M^2\times\r$ which can be expressed as

\begin{equation}\label{eE}
e_i=\sum_{j=1}^3a_i^jE_j, i=1, 2, 3, (a_i^j)\in SO(3).
\end{equation}

By the choices $e_i$,  we have
\begin{equation}\label{zb}
\begin{array}{lll}
a_1^3=\langle e_1, E_3\rangle=0,\;a_3^{3}\neq\pm1,\;({\rm and \;hence})\; a_2^{3}\neq0,
\end{array}
\end{equation}
and hence  $e_1=a_1^1E_1+a_1^2E_2$ with $(a_1^1)^2+(a_1^2)^2=1$.\\
Furthermore, we can check that
\begin{equation}\label{bb1}
 f_1=0,\;\nabla_{e_1}e_1=0.
\end{equation}
In fact, a straightforward computation using (\ref{R2}) gives
\begin{equation}\label{bb2}
\begin{array}{lll}
-f_1\sum\limits_{i=1}^{3}a_2^iE_i=-f_1e_{2}=\nabla_{e_{1}} e_{1}=\nabla_{e_{1}}(\sum\limits_{i=1}^{3}a_1^iE_i)
=\sum\limits_{i=1}^{3}e_1(a_1^i)E_i+\sum\limits_{i,j=1}^{3}a_1^ja_1^i\nabla_{E_j}E_i.
\end{array}
\end{equation}
Using (\ref{g1}), the fact that $a_1^3=0$, and comparing the coefficient of $E_3$  on both sides of (\ref{bb2}) yields $-f_1a_2^3=e_1(a_1^3)=0$. From this we obtain $f_1=0$ since $a_2^3\neq0$, and hence $\nabla_{e_1}e_1=0$.\\

It is not difficult to check that
\begin{align}\label{Eb}
\bar{E}_1=a_1^2E_1-a_1^1E_2,\; \bar{E}_2=a_1^1E_1+a_1^2E_2,\; \bar{E}_3=E_3
\end{align}
defines another  orthonormal  frame on $U\times\r\subseteq M^2\times\r$ which satisfies
\begin{equation}\label{GG1}
\begin{array}{lll}
[\bar{E}_1,\bar{E}_3]=\bar{f}_3\bar{E}_2,\;[\bar{E}_2,\bar{E}_3]=-\bar{f}_3\bar{E}_1,\;\;[\bar{E}_1,\bar{E}_2]=\bar{f}\bar{E}_{1},\\
\nabla_{\bar{E}_{1}}\bar{E}_{1}=-\bar{f}\bar{E}_{2},\;\nabla_{\bar{E}_{1}}\bar{E}_{2}=\bar{f}\bar{E}_{1},\;\nabla_{\bar{E}_{3}}\bar{E}_{1}=-\bar{f}_3\bar{E}_2,
\;\nabla_{\bar{E}_{3}}\bar{E}_{2}=\bar{f}_3\bar{E}_1,\\
{\rm all\;other}\;\nabla_{\bar{E}_{i}}\bar{E}_{j}=0,\;i,\;j=1,2,3,
\end{array}
\end{equation}
with
\begin{equation}\label{GG2}
\begin{array}{lll}
 \bar{f}=E_1(a_1^1)+E_2(a_1^2)+fa_1^2, \;\bar{f}_3=a_1^2E_3(a_1^1)-a_1^1E_3(a_1^2),
\end{array}
\end{equation}
 and the only possible
nonzero components of the (Ricci) curvatures given by
\begin{equation}\label{g2}\notag
\begin{array}{lll}
{\rm Ric}\, (\bar{E}_{1},\bar{E}_{1})={\rm Ric}(\bar{E}_{2},\bar{E}_{2})= R_{1212}=g(R(\bar{E}_{1},\bar{E}_{2})\bar{E}_{2},\bar{E}_{1})=-\bar{E}_2(\bar{f})-\bar{f}^2,
\end{array}
\end{equation}
which is the Gauss curvature of $M^2$ on $U$ denoted by $K^{M^2}=-\bar{E}_2(\bar{f})-\bar{f}^2$. \\

We can also check that by introducing the new variables $\theta, \alpha$ so that
\begin{align}
 a_1^1=\cos \theta, a_1^2=\sin \theta, a_2^3=\sin \alpha, \;\;a_3^3= \cos \alpha,
 \end{align}
we can rewrite (\ref{Eb}) and (\ref{eE}), respectively, as
\begin{align}
\bar{E}_1=\sin\theta E_1-\cos\theta E_2,\;\bar{E}_2=\cos\theta E_1+\sin\theta E_2, \;\;\bar{E}_3=E_3,
\end{align}

\begin{equation}\label{hhb1}
\begin{cases}
 e_1=\cos\theta E_1+\sin\theta E_2, \\
 e_2=-\cos\alpha(\sin\theta E_1-\cos\theta E_2)+\sin\alpha E_3, \\
 e_3=\sin\alpha(\sin\theta E_1-\cos\theta E_2)+\cos\alpha E_3,
\end{cases}
\end{equation}
where
\begin{equation}\label{hhb2}
\begin{cases}
 a_1^1=\cos\theta,\; a_1^2=\sin\theta,\; a_1^3=0,\\
a_2^1=-\cos\alpha\sin\theta,\;a_2^2=\cos\alpha\cos\theta,\;a_2^3=\sin\alpha, \\
 a_3^1=\sin\alpha\sin\theta,\;a_3^2=-\sin\alpha\cos\theta,\;a_3^3=\cos\alpha.
\end{cases}
\end{equation}

It is also clear that  the relationship between the orthonormal  frames $\{e_i\}$ and $\{\bar{E}_i\}$ is given by
 \begin{equation}\label{hb3}
 e_1=\bar{E}_2,\;
 e_2=-\cos\alpha \bar{E}_1+\sin\alpha E_3,\;
 e_3=\sin\alpha \bar{E}_1+\cos\alpha E_3.
\end{equation}


To compute the integrability data of the  frame $\{e_1,\;e_2,\;e_3\}$, we first use (\ref{g1}), (\ref{R2}), $a_1^3=f_1=0$, and a  computation  similar to  those used  to obtain (\ref{bb2}), to have
\begin{equation}\label{thb2}
\begin{array}{lll}
e_1(a_{2}^{3})=-\sigma a_{3}^{3},\;
e_1(a_{3}^{3})=\sigma a_{2}^{3},\;
e_2(a_{2}^{3})=0,\;
e_2(a_{3}^{3})=0,\\
e_3(a_{2}^{3})=-\kappa_2a_{3}^{3},\;
e_3(a_{3}^{3})=\kappa_2a_{2}^{3},\;
\kappa_1a_{3}^{3}=(\sigma-f_{3})a_{2}^{3},\;
f_2a_{2}^{3}=\sigma a_{3}^{3},\\
e_1(a_1^1)=-a^1_1a_1^2f,\;e_1(a_1^2)=(a^1_1)^2f.
\end{array}
\end{equation}

Using (\ref{g1}), (\ref{hb3}), (\ref{GG1}),  the 1st and  the 2nd equations of (\ref{thb2}) we have
\begin{equation}\label{hb6}
\begin{array}{lll}
f_3 e_2+\kappa_1e_3=[e_1,e_3]=[e_1,\sin\alpha \bar{E}_1+\cos\alpha E_3]\\
=(\sigma+\bar{f}\sin\alpha\cos\alpha-\cos^2\alpha E_3(\theta))e_2+(-\bar{f}\sin^2\alpha+\sin\alpha\cos\alpha E_3(\theta))e_3.
\end{array}
\end{equation}

Comparing coefficients on both sides of (\ref{hb6}) yields
\begin{equation}\label{hb7}
\begin{array}{lll}
f_3=\sigma+\bar{f}\sin\alpha\cos\alpha-\cos^2\alpha E_3(\theta),\;
\kappa_1=-\bar{f}\sin^2\alpha+\sin\alpha\cos\alpha E_3(\theta).
\end{array}
\end{equation}

Similarly, by computing
\begin{equation}\label{hb8}
\begin{array}{lll}
f_2e_2-2\sigma e_3=[e_1,e_2]=[e_1,-\cos\alpha \bar{E}_1+\sin\alpha E_3]\\
=(-\bar{f}\cos^2\alpha-\sin\alpha\cos\alpha E_3(\theta))e_2+(-\sigma+\bar{f}\sin\alpha\cos\alpha+\sin^2\alpha E_3(\theta))e_3,
\end{array}
\end{equation}
and comparing coefficients on  both sides of this equation, we have
\begin{equation}\label{hb9}
\begin{array}{lll}
\sigma=-\bar{f}\sin\alpha\cos\alpha-\sin^2\alpha E_3(\theta),\;f_2=-\bar{f}\cos^2\alpha-\sin\alpha\cos\alpha E_3(\theta).
\end{array}
\end{equation}
Using the 1st equation of (\ref{hb7}) and the 1st equation of (\ref{hb9}), we obtain
\begin{equation}\label{hb10}
\begin{array}{lll}
f_3=- E_3(\theta)=\sin\theta E_3(\cos\theta)-\cos\theta E_3(\sin\theta)=a_1^2E_3(a_1^1)-a_1^1E_3(a_1^2).
\end{array}
\end{equation}
Using the 5th and 6th equation of (\ref{thb2}), a simple computation yields
\begin{equation}\label{k2}
\begin{array}{lll}
\kappa_2=\sin\alpha e_3(\cos\alpha)-\cos\alpha e_3(\sin\alpha)=-e_3(\alpha).
\end{array}
\end{equation}
From (\ref{bb1}),  (\ref{hb7}), (\ref{hb9}), (\ref{hb10}), and (\ref{k2}), we obtain  the integrability data (\ref{ida1}).
\end{proof}

Now we are ready to prove the following theorem which provides the main tool  to prove our classification theorems.

\begin{theorem}\label{Cla}
Let $\pi: M^2\times\r\to (N^2,h)$ be a Riemannian submersion.  Then,
the orthonormal   frame  $\{e_i\}$ given by (\ref{GF})  is adapted to the Riemannian submersion $\pi$ with the integrability data
 \begin{align}\label{ID0}
f_1=0,\;f_2=-\bar{f}\cos^2\alpha,\;f_3=0,\;
\sigma=-\bar{f}\sin\alpha\cos\alpha,\;
\kappa_1=-\bar{f}\sin^2\alpha,\;
\kappa_2=-e_3(\alpha).
\end{align}

\end{theorem}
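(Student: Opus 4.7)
The frame $\{e_i\}$ of (\ref{GF}) and its generalized integrability data (\ref{ida1}) are already constructed in Proposition \ref{Clac}. Comparing (\ref{ida1}) with the desired (\ref{ID0}), the only gap is the vanishing of $E_3(\theta)$, equivalently $f_3=0$, which is precisely the statement that the frame is adapted to $\pi$ in the sense made at the start of this section. The proof therefore reduces to establishing $f_3=0$; once this is known, all other components of (\ref{ID0}) follow by plugging $E_3(\theta)=0$ into (\ref{ida1}).

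\textbf{Main step.} The identity $\nabla_{e_1}e_1 = 0$ recorded in (\ref{bb1}) places us exactly in the hypothesis of Lemma \ref{L1}, which supplies the dichotomy: either the frame $\{e_i\}$ is already adapted to $\pi$ (in which case $f_3=0$ and we are done), or $\nabla_{e_2}e_2 \equiv 0$ on the neighborhood. In the first, generic alternative the theorem follows immediately from (\ref{ida1}) by substituting $E_3(\theta)=-f_3=0$.

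\textbf{Main obstacle.} The delicate case is the branch $\nabla_{e_2}e_2\equiv 0$. Here the formula $\nabla_{e_2}e_2=f_2 e_1$ from (\ref{R2}) forces $f_2=0$, which combined with the expression for $f_2$ in (\ref{ida1}) yields the algebraic relation $\bar{f}\cos\alpha+\sin\alpha E_3(\theta)=0$, and hence $\sigma=0$ by (\ref{ida1}). The plan is to feed $f_1=f_2=\sigma=0$ into the curvature identity $R(e_1,e_2,e_1,e_2)=(a_3^3)^2K^{M^2}$ of (\ref{RC0}), which collapses to $\cos^2\alpha\, K^{M^2}=0$, and then to split into the sub-cases $\cos\alpha=0$ and $K^{M^2}=0$. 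In the first sub-case, comparing the direct computation of $\nabla_{e_1}e_3$ in the frame (\ref{GF}) against the formula $\nabla_{e_1}e_3=\sigma e_2$ of (\ref{R2}) forces $E_3(\theta)=0$ outright. In the second sub-case, combining the remaining curvature identities of (\ref{RC0}) with the relation $\bar{f}\cos\alpha=\sin\alpha f_3$ should force $\bar{f}=0$ and hence $f_3=0$; isolating this last step is the main technical obstacle of the proof. In every sub-case one obtains $f_3=0$, and the data (\ref{ID0}) then follows from (\ref{ida1}) by substitution.
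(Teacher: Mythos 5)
Your overall strategy coincides with the paper's: invoke Lemma \ref{L1} via $\nabla_{e_1}e_1=0$, dispose of the generic branch immediately, and in the branch $\nabla_{e_2}e_2\equiv 0$ derive $f_2=\sigma=0$, feed this into the fourth identity of (\ref{RC0}) to get $\cos^2\alpha\,K^{M^2}=0$, and split into the sub-cases $\cos\alpha=0$ and $K^{M^2}=0$. Your treatment of the sub-case $\cos\alpha=0$ is sound and is equivalent to the paper's use of the seventh equation of (\ref{thb2}), namely $\kappa_1 a_3^3=(\sigma-f_3)a_2^3$, which with $a_3^3=\sigma=0$ and $a_2^3=\pm1$ gives $f_3=0$ at once. (A small point of order: your derivation of $\sigma=0$ divides $f_2=-\cos\alpha\left(\bar{f}\cos\alpha+\sin\alpha E_3(\theta)\right)=0$ by $\cos\alpha$, which is only legitimate off the zero set of $\cos\alpha$; on that set $\sigma=0$ has to come from the direct computation you invoke in the first sub-case, so the logical order needs rearranging.)

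The genuine gap is the sub-case $K^{M^2}=0$, $\cos\alpha\neq 0,\pm1$, which you yourself flag as the unresolved obstacle. The remaining identities of (\ref{RC0}) will not force $\bar{f}=0$: with $f_1=f_2=\sigma=0$ and $K^{M^2}=0$, every equation in (\ref{RC0}) either reduces to $0=0$ or to a first-order relation such as $e_1(\kappa_1)=\kappa_1^2$ (which is just $K^{M^2}=-e_1(\bar{f})-\bar{f}^2=0$ restated) or $e_1(\kappa_2)=\kappa_1\kappa_2$, $e_2(\kappa_2)=\kappa_2^2$; none of these rule out $\bar{f}\not\equiv 0$ together with $f_3=\bar{f}\cos\alpha/\sin\alpha\neq 0$. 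The paper closes this case by a second, independent application of Lemma \ref{L1}: the auxiliary frame $\{\bar{E}_1,\bar{E}_2,\bar{E}_3\}$ of (\ref{Eb}) is a natural frame for the projection $\pi_1:M^2\times\r\to M^2$, $\pi_1(p,z)=p$, with generalized integrability data $\bar{f}_1=\bar{f}$, $\bar{f}_2=\bar{\kappa}_1=\bar{\kappa}_2=\bar{\sigma}=0$ and, crucially, $\bar{f}_3=f_3$ (equation (\ref{hhb3})). Since $\nabla_{\bar{E}_2}\bar{E}_2=0$, Lemma \ref{L1} applied to $\pi_1$ yields the dichotomy: either $\bar{f}\not\equiv 0$, so $\{\bar{E}_i\}$ is adapted to $\pi_1$ and $f_3=\bar{f}_3=0$; or $\bar{f}=0$, and then $\sigma=-\sin^2\alpha\,E_3(\theta)=0$ with $\sin\alpha\neq 0$ forces $f_3=-E_3(\theta)=0$. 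Without this second use of the lemma (or some substitute for it), your argument does not establish $f_3=0$ precisely in the case where it is hardest.
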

\begin{proof}

By Proposition \ref{Clac}, we have  $\nabla_{e_{1}}e_{1}=0$. Thus, we can use Lemma \ref{L1} to conclude that either $\nabla_{e_{2}}e_{2}\not\equiv0$ in which case  the
chosen frame $\{e_1, e_2, e_3\}$ is adapted  to the Riemannian submersion $\pi$, or \;$\nabla_{e_{2}}e_{2}=0$. So, it suffices to prove that the orthonormal frame is also adapted in the the latter case: $\nabla_{e_{2}}e_{2}=0$, i.e., $f_2=0$.
Using the assumptions (\ref{zb}), $f_2=0$,  the 2nd and the 8th equations of (\ref{thb2})  we obtain $\sigma=0$.
Then, using the 4th equation of (\ref{RC0}) with $f_1=f_2=\sigma=a_1^3=0$,  we have either $K^{M^2}=0$, or $a_3^3=0$ and hence $(a_2^3)^2=1$.\\

For the case of $a_3^3=0$  and  $(a_2^3)^2=1$, one  applies  the 7th equation of  (\ref{thb2}) with $\sigma=0$ to conclude that $f_3=0$, which implies that the chosen frame $\{e_1=a_1^1E_1+a_1^2E_2, e_2, e_3\}$  is adapted to the Riemannian submersion.\\

For the case $K^{M^2}=0$ and $a_3^3\neq 0, \pm1$ (and hence $a_2^3\neq0,\pm1$),  we will prove that $f_3=0$ and hence the orthonormal frame $\{e_i\}$ is adapted to $\pi$.

First, we note that the orthonormal frame $\{\bar{E}_i\}$ given in (\ref{Eb}) is a natural orthonormal frame with respect to the harmonic Riemannian submersion
$$\pi_1:M^2\times\r\to M^2,\;\pi_1(p,z)=p.$$
By comparing the equations on the first line of (\ref{GG1}) with those of (\ref{R2})  and using  (\ref{hb10}) and the second equation of (\ref{GG2}), we find the generalized integrability data of $\{\bar{E}_i\}$ to be

\begin{align}\label{hhb3}
  \bar{f}_1 &=\bar{f},\;\bar{f}_2=\bar{\kappa}_1=\bar{\kappa}_2=\bar{\sigma}=0,\;
 \bar{f_3}=f_3=-E_3(\theta).
\end{align}

Applying Lemma \ref{L1} with $e_1=\bar{E}_2, e_2=\bar{E}_1$, we deduce that either $\bar{f}_1=\bar{f}\not\equiv0$ in which case the orthonormal frame $\{\bar{E}_i\}$ is adapted  to the Riemannian submersion $\pi_1$, and hence  $\bar{f_3}=f_3=0$, or $\bar{f}_1=\bar{f}=0$. For the latter case, we use the assumptions $\bar{f}_1=\bar{f}=0$, together with $a_3^3\neq 0, \pm1$ (and hence $a_2^3\neq0,\pm1$), $\sigma=f_2=0$ and (\ref{hb9}) to have $f_3=-E_3(\theta)=0$. Thus, in any case, the frame $\{e_i\}$ is adapted to the Riemannian submersion $\pi$, and the generalized integrability data (\ref{ida1}) reduces to the integrability data (\ref{ID0}).
\end{proof}

\begin{corollary}\label{Co1}
A biharmonic Riemannian submersion  $\pi:M^2\times\r\to (N^2,h)$  with $K^{M^2}=K^N=0$ has to be harmonic.
\end{corollary}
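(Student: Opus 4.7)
The strategy is to apply Theorem~\ref{Cla} to obtain the adapted orthonormal frame $\{e_1,e_2,e_3\}$ with integrability data (\ref{ID0}), and then to force $\kappa_1 = \kappa_2 = 0$: since $\nabla_{e_3}e_3 = \kappa_1 e_1 + \kappa_2 e_2$, that identity is precisely the minimal-fiber condition characterizing harmonicity of a Riemannian submersion. The first step is to pin down $\sigma$. With $f_1 = f_3 = 0$ from (\ref{ID0}), formula (\ref{GCB}) reduces to $K^N = e_1(f_2) - f_2^2$, while the fourth curvature identity of (\ref{RC0}) reads $(a_3^3)^2 K^{M^2} = e_1(f_2) - f_2^2 - 3\sigma^2$. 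Subtracting and inserting $K^{M^2} = K^N = 0$ gives $\sigma \equiv 0$ on the neighborhood $W$.

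\textbf{Case reduction.} Since $\sigma = -\bar f \sin\alpha\cos\alpha \equiv 0$, at each point one of the three factors vanishes. On the open subset $\{\bar f \neq 0\}$, continuity of $\alpha$ together with the discreteness of the zero set of $\sin\alpha\cos\alpha$ forces $\alpha$ to be locally constant; hence $\kappa_2 = -e_3(\alpha) = 0$, and either $\sin\alpha = 0$ (so $\kappa_1 = -\bar f\sin^2\alpha = 0$ and we are finished) or $\cos\alpha = 0$ and $\kappa_1 = -\bar f$ still needs to be shown to vanish (Case~B). On the open subset $\{\bar f = 0\}^\circ$ one has $f_2 = \kappa_1 = 0$, and it remains to rule out $\kappa_2 \not\equiv 0$ (Case~C). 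Continuity of $\kappa_1,\kappa_2$ then extends the open-set conclusions to all of $W$.

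\textbf{The biharmonic argument in Cases B and C.} The two cases are symmetric under the swap $(e_1,\kappa_1) \leftrightarrow (e_2,\kappa_2)$, so I detail only Case~B. With $f_1 = f_2 = f_3 = \sigma = \kappa_2 = 0$ and $K^N = 0$, the first biharmonic equation of Lemma~\ref{Lem1} collapses to $\Delta^M\kappa_1 = 0$. The curvature identities (\ref{RC0}), using $K^{M^2} = 0$, give $e_1(\kappa_1) = \kappa_1^2$ and $e_2(\kappa_1) = 0$; combined with $\nabla_{e_3}e_3 = \kappa_1 e_1$, a direct expansion yields $\Delta^M\kappa_1 = \kappa_1^3 + e_3^2(\kappa_1)$, so biharmonicity reads $e_3^2(\kappa_1) = -\kappa_1^3$. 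I then iterate the bracket relation $[e_1,e_3] = \kappa_1 e_3$: applied to $\kappa_1$ it gives $e_1(u) = 3\kappa_1 u$ for $u := e_3(\kappa_1)$, and applied a second time to $u$, after substituting $e_3(u) = -\kappa_1^3$ and $e_1(\kappa_1) = \kappa_1^2$, it produces the pointwise identity $u^2 = \kappa_1^4/3$. Differentiating this identity along $e_3$ and using $e_3(u) = -\kappa_1^3$ once more forces $\kappa_1^3 u = 0$, and combining with $u^2 = \kappa_1^4/3$ yields $\kappa_1 \equiv 0$. Case~C follows by the identical argument with $\kappa_2$ and the bracket $[e_2,e_3] = \kappa_2 e_3$.

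\textbf{Main obstacle.} Once $\sigma = 0$ is established, the case reduction is pure bookkeeping, but the biharmonic PDE $\Delta^M\kappa_1 = 0$ on its own is vastly under-determined. The crux is the bracket iteration: only by extracting the algebraic identity $u^2 = \kappa_1^4/3$ from two applications of $[e_1,e_3] = \kappa_1 e_3$ do we have enough structure to combine with biharmonicity and arrive at a pointwise constraint forcing $\kappa_1 \equiv 0$. This over-determination, which is absent in the harmonic setting of Theorem~\ref{HTh}, is the real substance of the argument.
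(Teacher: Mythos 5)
Your proposal is correct and follows essentially the same route as the paper: derive $\sigma=0$ from the fourth identity of (\ref{RC0}) combined with (\ref{GCB}), split into the cases $\sin\alpha=0$, $\cos\alpha=0$, and $\bar f=0$, and in the two nontrivial cases reduce biharmonicity to $\Delta\kappa_i=0$, then iterate the bracket $[e_i,e_3]=\kappa_i e_3$ to obtain $3\bigl(e_3(\kappa_i)\bigr)^2=\kappa_i^4$ and hence $\kappa_i\equiv 0$. Your organization of the case split via open sets and a density/continuity argument is marginally cleaner than the paper's Cases I--III, but the substance is identical.
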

\begin{proof}
By Theorem \ref{Cla},  the orthonormal frame $\{e_i\}$ is adapted to the Riemannian submersion $\pi$ with the integrability data (\ref{ID0}).  Using the 4th equation of (\ref{RC0}) and the assumption that $K^{M^2}=K^N=0$, we have $\sigma=0$. From this and the 4th equation of (\ref{ID0}), we have either $\bar{f}=0$\; or \;$\sin\alpha\cos\alpha=0$. Therefore, we can obtain our corollary by the following cases:\\

Case I: $\bar{f}=0$ and $a_2^3a_3^3=\sin\alpha\cos\alpha\neq0$.
Combining these, (\ref{ID0}), and the assumption that $K^{M^2}=K^N=0$  we have
\begin{equation}\label{hhb11}
\begin{array}{lll}
a_1^1=f_1=f_2=f_3=\sigma=\kappa_1=K^{M^2}=K^N=0,\;a_2^3,\;a_3^3\neq0,\pm1,\;
\kappa_2=-e_3(\alpha).
\end{array}
\end{equation}
Using (\ref{hhb11}) and  (\ref{RC0}) we obtain
 \begin{equation}\label{hhb12}
\begin{array}{lll}
e_2(\kappa_2)=\kappa_2^2,\;e_1(\kappa_2)=0.
\end{array}
\end{equation}
Thus, in this case, if $\pi$ is biharmonic with $K^N=0$, then we have (\ref{R2}), (\ref{hhb11}), (\ref{hhb12}), and  the biharmonic equation (\ref{lem1}) reduces to
 \begin{equation}\label{hhb13}
\begin{array}{lll}
\Delta\kappa_2=0,
\end{array}
\end{equation}
and hence
 \begin{equation}\label{hhb14}
\begin{array}{lll}
e_3e_3(\kappa_2)=-\kappa_2^3.
\end{array}
\end{equation}
 Applying $e_2$ to both sides of (\ref{hhb14}) we have
  \begin{equation}\label{hhb15}
\begin{array}{lll}
e_2e_3e_3(\kappa_2)=-3\kappa_2^4.
\end{array}
\end{equation}
 On the other hand, we use (\ref{R2}) and $e_2e_3(\kappa_2)-e_3e_2(\kappa_2)=[e_2,e_3](\kappa_2)=\kappa_2e_3(\kappa_2)$ to obtain
 \begin{equation}\label{hhb16}
\begin{array}{lll}
e_2e_3(\kappa_2)=3\kappa_2e_3(\kappa_2).
\end{array}
\end{equation}
 By applying $e_3$ to both sides of (\ref{hhb16}) and using (\ref{hhb14}) we get
  \begin{equation}\label{hhb17}
\begin{array}{lll}
e_3e_2e_3(\kappa_2)=3(e_3(\kappa_2))^2-3\kappa_2^4.
\end{array}
\end{equation}
 A further computation using (\ref{hhb15}) and (\ref{hhb17}) gives
  \begin{equation}\label{hhb18}
\begin{array}{lll}
\kappa_2e_3(e_3(\kappa_2))=[e_2,e_3](e_3(\kappa_2))\\
=e_2e_3e_3(\kappa_2)-e_3e_2e_3(\kappa_2)
=-3(e_3(\kappa_2))^2.
\end{array}
\end{equation}
 This, together with (\ref{hhb14}), implies
 \begin{equation}\label{hhb19}
\begin{array}{lll}
3(e_3(\kappa_2))^2=\kappa_2^4,
\end{array}
\end{equation}
  Applying $e_3$ to both sides of (\ref{hhb19}) and using (\ref{hhb14})  we get
 \begin{equation}\label{hhb20}
\begin{array}{lll}
10\kappa_2^3e_3(\kappa_2)=0,
\end{array}
\end{equation}
 which implies either $\kappa_2=0$ or $e_3(\kappa_2)=0$. For the latter case, we use (\ref{hhb19}) to obtain $\kappa_2=0$. So  in any case,  the
Riemannian submersion $\pi$ is  harmonic since $\kappa_1=\kappa_2=0$.\\

Case II: $a_3^3=\cos\alpha=0$ and $ a_2^3=\sin\alpha=\pm1$. In this case, (\ref{ID0}) reduces to
 \begin{equation}\label{hhhb11}
\begin{array}{lll}
f_1=f_2=f_3=\sigma=\kappa_2=0,\;
\kappa_1=-\bar{f}.
\end{array}
\end{equation}
Using these and (\ref{RC0}) we have
 \begin{equation}\label{hhhb12}
\begin{array}{lll}
e_1(\kappa_1)=\kappa_1^2,\;e_2(\kappa_1)=0.
\end{array}
\end{equation}
It follows that in this case, we can use (\ref{R2}), (\ref{hhhb11}), (\ref{hhhb12}) and $K^N=0$ to conclude that the biharmonic equation (\ref{lem1}) reduces to
 \begin{equation}\label{hhhb13}
\begin{array}{lll}
\Delta\kappa_1=0.
\end{array}
\end{equation}
We make a computation similar to those used to compute  (\ref{hhb14})-(\ref{hhb20}) in Case I to have $\kappa_1=0$. So $\pi$ is harmonic since $\kappa_1=\kappa_2=0$.\\

Case III: $a_3^3=\cos\alpha=\pm1$ and $\sin\alpha=0$. In this case, using the 5th and 6th equations of
(\ref{ID0}) we have $\kappa_1=\kappa_2=0$ implying that the Riemannian submersion $\pi$ is  harmonic.\\

Summarizing all the above cases we obtain the corollary.

\end{proof}

Now we are ready to give a first  characterization of  proper biharmonic Riemannian submersions from $M^2\times\r$.
\begin{theorem}\label{Cla1}
Let $\pi:M^2\times\r\to (N^2,h)$ be a  proper biharmonic Riemannian submersion.  Then we have \\
$(a)$ The target surface is flat, and the adapted frame (\ref{GF}) has the integrability data $f_1=f_2=\kappa_2=\sigma=0,\; \kappa_1=-\bar{f} \ne 0$ solving the following PDE
\begin{equation}\label{zr1}
\begin{array}{lll}
\Delta\kappa_1=0,\;{\rm i.e.},\; \Delta^{M^2}\bar{f}=0,
\end{array}
\end{equation}
where $\Delta^{M^2}$ denotes the Laplacian of $M^2$ and $\bar{f}=-\sin \theta E_1(\theta)+\cos\theta E_2(\theta)+ f\sin \theta$ is a function on $U\subseteq M^2$,\; or,\\
$(b)$  The target surface is  non-flat,  and the adapted frame (\ref{GF}) has the integrability data
\begin{align}\label{B0}
f_1=\kappa_2=0,\; f_2=-\bar{f}\cos^2\alpha, \;\kappa_1=-\bar{f}\sin^2\alpha,\; \sigma=-\bar{f}\sin\alpha\cos\alpha
\end{align}
satisfying $f_2\kappa_1\sigma\neq0$, $K^N=e_1(f_2)-f_2^2\neq0$, $\sigma=-e_1(\alpha)$,\;$e_i(f_2)=e_i(\kappa_1)=e_i(\sigma)=e_i(K^{M^2})=e_i(\bar{f})=e_i(\alpha)=0,\;i=2,3$,\; and $\kappa_1$ solving the following PDE
\begin{equation}\label{zr2}
\begin{array}{lll}
\Delta\kappa_1-\kappa_1\{-K^N+f_2^2\}=0,
\end{array}
\end{equation}
which is equivalent to
\begin{equation}\label{zr3}
\begin{array}{lll}
\alpha'''\sin\alpha\cos^2\alpha+\cos\alpha(\sin^2\alpha+3)\alpha'\alpha''+\sin\alpha(2\cos^2\alpha+3)\alpha'^3=0,
\end{array}
\end{equation}
where $\alpha',\;\alpha''$\;and $\alpha'''$ denote the first, the second and the third derivative of $\alpha$ along the vector field $e_1$.
\end{theorem}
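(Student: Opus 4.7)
The plan is to exploit the structural constraints furnished by Theorem \ref{Cla}, which gives an adapted orthonormal frame $\{e_i\}$ with integrability data (\ref{ID0}). Three algebraic observations drive the argument: the product identity $\sigma^2 = f_2 \kappa_1$ (visible in (\ref{ID0})), the Gauss equation $K^N = e_1(f_2) - f_2^2$ (specializing (\ref{GCB}) to $f_1 = f_3 = 0$), and the formulas $\sigma = -e_1(\alpha)$ and $e_2(\alpha) = 0$, obtained by differentiating $\cos\alpha = \langle e_3, E_3 \rangle$ along $e_1$ and $e_2$ using $\nabla_{e_1}e_3 = \sigma e_2$, $\nabla_{e_2}e_3 = -\sigma e_1$ from (\ref{R2}) and $\nabla_{e_i}E_3 = 0$ from the semi-geodesic structure in Lemma \ref{2.1}. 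I first remove the degenerate case $\sin\alpha \equiv 0$, in which $\alpha$ is locally constant, $\kappa_1 = \kappa_2 = 0$, and $\pi$ is harmonic; hence $\sin\alpha \not\equiv 0$.

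For part (a), I assume $K^N = 0$. Combining the fourth identity of (\ref{RC0}) with $K^N = e_1(f_2) - f_2^2$ yields $K^N = 3\sigma^2 + \cos^2\alpha\, K^{M^2}$, so $\cos^2\alpha\, K^{M^2} = -3\sigma^2$. The case $\cos\alpha = 0$ makes $\alpha$ locally constant, so $\kappa_2 = 0$, $f_2 = \sigma = 0$ and $\kappa_1 = -\bar{f}$; the second biharmonic equation of Lemma \ref{Lem1} is trivially satisfied and the first reduces to $\Delta\kappa_1 = 0$, which is (\ref{zr1}), with $\bar{f}\neq 0$ forced by non-harmonicity. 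The complementary subcase $\cos\alpha \neq 0$ is ruled out: $\sigma = 0$ forces $\bar{f}=0$ and hence harmonicity via Corollary \ref{Co1}, while $\sigma \neq 0$ (with $K^{M^2} < 0$) contradicts a compatibility argument combining (\ref{RC0}) with both biharmonic equations of Lemma \ref{Lem1}.

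For part (b), I assume $K^N \neq 0$. The previous analysis forces $\cos\alpha \neq 0$, so $\bar{f}\neq 0$ (again by Corollary \ref{Co1}) and $f_2 \kappa_1 \sigma \neq 0$. The principal technical step is to show $\kappa_2 \equiv 0$ and that all $e_2, e_3$-derivatives of $\bar{f}, \alpha, f_2, \sigma, \kappa_1, K^{M^2}$ vanish. I extract from (\ref{RC0}) the scalar identities $e_2(\sigma) = 2\kappa_2 \sigma$, $e_2(\kappa_2) = \kappa_2^2$ (via $\sigma^2 = f_2\kappa_1$), $e_1(\kappa_2) - e_3(\sigma) - \kappa_1\kappa_2 = 0$ and $e_2(\kappa_1) + e_3(\sigma) + \kappa_2 f_2 - \kappa_1\kappa_2 = 0$. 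Using the ratio $f_2 : \sigma : \kappa_1 = \cos^2\alpha : \sin\alpha\cos\alpha : \sin^2\alpha$ together with $e_2(\alpha) = 0$ propagates these into $e_2(\bar{f}) = 2\kappa_2\bar{f}$, $e_2(\kappa_1) = 2\kappa_2\kappa_1$, $e_2(f_2) = 2\kappa_2 f_2$ and $e_1(\kappa_2) = -\kappa_2 f_2$. Substituting into the second biharmonic equation of Lemma \ref{Lem1}, computing $\Delta\kappa_2$ via (\ref{R2}), and using the commutator $[e_1, e_3] = \kappa_1 e_3$, reduces the system to a constraint on $e_3 e_3(\kappa_2)$ that is incompatible with $\kappa_2 \not\equiv 0$.

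With $\kappa_2 = 0$, the identities propagate $e_2, e_3$-independence of the remaining functions, and the first biharmonic equation of Lemma \ref{Lem1} reduces to (\ref{zr2}), since $\Delta\kappa_1$ collapses to $\kappa_1'' - (f_2 + \kappa_1)\kappa_1'$ via $\nabla_{e_2}e_2 = f_2 e_1$ and $\nabla_{e_3}e_3 = \kappa_1 e_1$. To pass to the single ODE (\ref{zr3}), I use $\sigma = -\alpha'$ to write $\bar{f} = \alpha'/(\sin\alpha\cos\alpha)$, yielding $\kappa_1 = -\alpha'\tan\alpha$, $f_2 = -\alpha'\cot\alpha$ and $K^N = -\alpha''\cot\alpha + \alpha'^2$; substituting into (\ref{zr2}) and simplifying produces (\ref{zr3}). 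The main obstacle is the extraction of $\kappa_2 \equiv 0$ in part (b) and the exclusion of the subcase $\sigma \neq 0$, $K^N = 0$, $\cos\alpha \neq 0$ in part (a); both demand careful bookkeeping of the overdetermined system formed by (\ref{RC0}), the biharmonic equations, and the algebraic constraint $\sigma^2 = f_2\kappa_1$.
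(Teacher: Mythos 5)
Your outline follows essentially the same route as the paper: Theorem \ref{Cla} supplies the adapted frame with data (\ref{ID0}), the identities $\sigma^2=f_2\kappa_1$, $\sigma=-e_1(\alpha)$, $e_2(\alpha)=0$ and the curvature relations (\ref{RC0}) are exactly the ones the paper exploits, and the reduction of (\ref{zr2}) to (\ref{zr3}) via $\kappa_1=-\alpha'\tan\alpha$, $f_2=-\alpha'\cot\alpha$, $K^N=-\alpha''\cot\alpha+\alpha'^2$ is correct. The only organizational difference is that you split on the flatness of $N^2$ first, whereas the paper splits on $\cos\alpha$ and $f_2$ (its Cases I--III); the content is the same either way. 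However, the two steps you defer as ``careful bookkeeping'' are precisely where the mathematical work of the proof lives, and your proposal does not actually carry them out.

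Concretely: (i) for $\kappa_2\equiv 0$, substituting the identities $e_2(\kappa_2)=\kappa_2^2$, $e_1(\kappa_2)=-\kappa_2 f_2$, $e_3(\kappa_2)=0$, $e_2(\kappa_1)=2\kappa_1\kappa_2$, $e_2(f_2)=2\kappa_2 f_2$ into the second equation of (\ref{lem1}) does not produce a condition on $e_3e_3(\kappa_2)$ as you suggest; it collapses to the algebraic relation $\kappa_2\left(4\sigma^2-\kappa_2^2\right)=0$, and the branch $\kappa_2^2=4\sigma^2$ must then be eliminated separately (the paper does this by applying $e_3$ and using $e_3(\sigma)=\kappa_2\bar f$ with $\bar f\neq 0$ to force $\kappa_2=0$, contradicting $\sigma\neq0$). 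Note also that $e_3(\kappa_2)=0$ is itself not free: it comes from commuting $[e_2,e_3]$ against $f_2$. (ii) For excluding the subcase $K^N=0$, $\cos\alpha\neq 0$, $\sigma\neq0$ in your part (a), a generic ``compatibility argument'' is not enough; one first needs all of Claim-1-type information (in particular $\kappa_2=0$ and $e_2,e_3$-invariance, which already uses the second biharmonic equation), after which $K^N=0$ combined with the fourth identity of (\ref{RC0}) gives $K^{M^2}=-3\bar f^2\sin^2\alpha$, and comparing with $K^{M^2}=-e_1(\bar f)-\bar f^2$ yields the ODE $\alpha''\cos\alpha-\alpha'^2\sin\alpha=0$; one must then eliminate $\alpha'''$ and $\alpha''$ between its derivative and the biharmonic ODE (\ref{zr3}) to reach $\alpha'^2\sin\alpha(9+\sin^2\alpha)=0$, forcing $\alpha$ constant and a contradiction. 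Until these two eliminations are written out, the proof is a correct skeleton rather than a proof.
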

\begin{proof}
By  Theorem \ref{Cla}, the local orthonormal   frame  given by (\ref{GF}) is  adapted to the Riemannian submersion $\pi$  with $e_3$ being vertical and the integrability data given by
\begin{align}\label{cI}
\begin{cases}
f_1=0,\;f_2=-\bar{f}\cos^2\alpha,\;f_3=-E_3(\theta)=0,\; \sigma=-\bar{f}\sin\alpha\cos\alpha,\\
\kappa_1=-\bar{f}\sin^2\alpha,\;\kappa_2=\sin\alpha e_3(\cos\alpha)-\cos\alpha e_3(\sin\alpha)=-e_3(\alpha).
\end{cases}
\end{align}

 We may assume that  $\cos\alpha=a_3^3\ne \pm1$, for otherwise $e_3=\pm E_3$, $\pi$ is the projection $M^2\times\r\to M^2$ followed by a Riemannian covering map and hence $\pi$ is harmonic. The rest of the proof will be done by considering the following  three cases.

Case I: $a_3^3=\cos\alpha=0$. In this case,  it follows from (\ref{cI}) that $f_1=f_2=\kappa_2=\sigma=0,\; \kappa_1=-\bar{f}\neq0$. From these and  (\ref{GCB}), we  conclude that $N^2$ has Gauss curvature  $K^N=0$ and hence it is a flat surface.  Therefore, biharmonic equation (\ref{lem1}) turns into
 $$\Delta\kappa_1=0.$$
 It is easy to see that the above equation is equivalent to $\Delta^{M^2}\bar{f}=0$ since $E_3\theta=0$ and hence $E_3\bar{f}=0$ and $\Delta {\bar f}=\Delta^{M^2}  {\bar f}+E_3E_3 {\bar f}=\Delta^{M^2}  {\bar f}$.\\

Case II: $a_3^3=\cos\alpha\neq0,\pm1$ and $f_2=0$. In this case, the second equation of (\ref{cI}) implies that $\bar{f}=0$, and hence we have $f_1=f_2=f_3=\kappa_1=\sigma=K^{M^2}=K^N=0$. Thus, we apply Corollary \ref{Co1} to conclude that the biharmonic Riemannian submersion is harmonic in this case.

Case III: $a_3^3=\cos\alpha\neq0,\pm1$ and   $f_2\neq0$.  In this case,  the hypotheses can be summarized as:
\begin{equation}\label{z1}
a_2^3,\;a_3^3\neq0, \pm1,\;f_2\neq0,\;a_1^3=f_1=f_3=0, \;e_3(f_1)=e_3(f_2)=0.
\end{equation}
{\bf Claim 1}: In this case, we have $\kappa_2=0$, $e_2(\bar{f})=e_3(\bar{f})=e_2(f_2)=e_3(f_2)=e_2(\kappa_1)=e_3(\kappa_1)=e_2(\sigma)=e_3(\sigma)=e_2(K^{M^2})=e_3(K^{M^2})=e_2(\alpha)=e_3(\alpha)=0,\;\kappa_1\sigma\neq0$, and $\alpha'=-\sigma=\bar{f}\sin\alpha\cos\alpha $, where $\alpha'$ denotes the first  derivative of $\alpha$ along $e_1=\bar{E}_2$.\\
 {\bf Proof of Claim 1}: Firstly, we use (\ref{z1}), the 7th and the 8th equation of (\ref{thb2}) to have $\kappa_1\sigma\neq0$.\\
  Secondly, by using (\ref{cI}) and the last equation of (\ref{z1}), we have $e_3(-\cos^2\alpha \bar{f})=e_3(f_2)=0$ and hence
\begin{equation}\label{bs4}
\begin{array}{lll}
e_3(\kappa_1)=e_3(-\sin^2\alpha \bar{f})=e_3[-(1-\cos^2\alpha) \bar{f}]=-e_3(\bar{f}).
\end{array}
\end{equation}
We use (\ref{cI}), the 5th and the 7th equation of (\ref{RC0}) with $a_1^3=0$ to obtain
\begin{equation}\label{bs5}
\begin{array}{lll}
\sigma^2=\kappa_1f_2,\;e_2(\kappa_2)=\kappa_2^2,\;e_2(\sigma)=2\kappa_2\sigma.
\end{array}
\end{equation}
On the other hand, note that $a_2^3=\sin\alpha,\;a_3^3=\cos\alpha$, a straightforward computation using the 6th equation of  (\ref{thb2}) gives
\begin{equation}\notag
\begin{array}{lll}
0=e_3(f_2)=e_3(-\cos^2\alpha \bar{f})
=-2\kappa_2\sin\alpha\cos\alpha \bar{f}-\cos^2\alpha e_3(\bar{f}).
\end{array}
\end{equation}
It follows that
\begin{equation}\label{bs6}
\begin{array}{lll}
 e_3(\bar{f})=\frac{-2\kappa_2\sin\alpha\cos\alpha \bar{f}}{\cos^2\alpha}=\frac{2\kappa_2\sigma}{\cos^2\alpha}.
\end{array}
\end{equation}
Using  (\ref{bs4}), (\ref{bs6}), the 1st equation of (\ref{bs5}) and the 3rd equation of  (\ref{RC0}) with $a_1^3=f_1=0$, we obtain
\begin{equation}\label{bs7}
\begin{array}{lll}
e_3(\sigma)=\frac{e_3(\sigma^2)}{2\sigma}=\frac{f_2e_3(\kappa_1)}{2\sigma}=\kappa_2\bar{f},\\
e_1(\kappa_2)=e_3(\sigma)+\kappa_1\kappa_2=\kappa_2\bar{f}-\kappa_2\sin^2\alpha \bar{f}=\kappa_2\cos^2\alpha \bar{f}=-\kappa_2f_2.
\end{array}
\end{equation}

On the other hand, using $e_2(\cos\alpha)=e_2(\sin\alpha)=0$, $\bar{f}=-\frac{\sigma}{\sin\alpha\cos\alpha}$  and $e_2(\sigma)=2\kappa_2\sigma$ and a simple computation we have
\begin{equation}\label{bs8}
\begin{array}{lll}
e_2( \bar{f})=-\frac{e_2(\sigma)}{\sin\alpha\cos\alpha}=\frac{-2\kappa_2\sigma}{\sin\alpha\cos\alpha}=2\kappa_2 \bar{f},\\
e_2(f_2)=e_2(-\cos^2\alpha \bar{f})=-\cos^2\alpha e_2( \bar{f})=-2\kappa_2\cos^2\alpha \bar{f}=2\kappa_2f_2,\\
e_2(\kappa_1)=e_2(-\sin^2\alpha \bar{f})=-\sin^2\alpha e_2( \bar{f})=-2\kappa_2\sin^2\alpha \bar{f}=2\kappa_1\kappa_2.
\end{array}
\end{equation}
Note that $\kappa_2e_3(f_2)=[e_2,e_3](f_2)=e_2 e_3(f_2)-e_3 e_2(f_2)$,  a further computation using (\ref{bs8})
gives $0=-e_3 (2\kappa_2f_2)=-f_2e_3(\kappa_2)$, which, together with $f_2\neq0$, implies that
\begin{equation}\label{bs9}
\begin{array}{lll}
e_3(\kappa_2)=0.
\end{array}
\end{equation}
Applying the 2nd equation of biharmonic equations (\ref{lem1}) and using (\ref{R2}), (\ref{bs5}), (\ref{bs7}), (\ref{bs8}), (\ref{bs9}) and (\ref{GCB}) with $f_1=f_3=0$, we have
\begin{equation}\label{bs10}
\begin{array}{lll}
0=-\Delta\kappa_2+2f_2e_2(\kappa_1)+\kappa_1e_2(f_2)-\kappa_1\kappa_2f_2+\kappa_2\{-K^N+f_2^2\}\\
=\sum\limits_{i=1}^3\{-e_ie_i(\kappa_2)+\nabla_{e_i}e_i(\kappa_2)\}+4\kappa_1\kappa_2f_2+2\kappa_1\kappa_2f_2-\kappa_1\kappa_2f_2+\kappa_2\{-e_1(f_2)+2f_2^2\}\\
=-\kappa_2^3+4\kappa_1\kappa_2f_2=\kappa_2(-\kappa_2^2+4\kappa_1f_2)=\kappa_2(-\kappa_2^2+4\sigma^2).
\end{array}
\end{equation}
Solving (\ref{bs10}) we must have $\kappa_2=0$ and $\kappa_2^2\neq4\sigma^2$. Indeed, if $\kappa_2^2=4\sigma^2$, then
using (\ref{bs9}), we have $4e_3(\sigma^2)=e_3(\kappa_2^2)=0$ and hence $e_3(\sigma)=0$. But since  $\bar{f}\neq0$ and (\ref{bs7}), we further have  $\kappa_2=0$ and hence $4\sigma^2=\kappa_2^2=0$, a contradiction, since $\sigma\neq0$.
Therefore, a straightforward computation using (\ref{bs4})--(\ref{bs8}), the 5th, the 6th equation  equation of  (\ref{thb2}) gives
\begin{equation}\label{bs11}
e_2(\bar{f})=e_3(\bar{f})=e_3(\sin\alpha)=e_3(\cos\alpha)=e_2(\kappa_1)=e_3(\kappa_1)=e_3(\sigma)=e_2(\sigma)=e_2(f_2)=0.
\end{equation}
Since $K^{M^2}=-\bar{E}_2(\bar{f})-\bar{f}^2=-e_1(\bar{f})-\bar{f}^2$, we see that $e_3(K^{M^2})=e_2(K^{M^2})=0$. From (\ref{thb2}) and (\ref{bs11}), we have  $e_2(\alpha)=e_3(\alpha)=0$, and a direct computation gives $\sigma\sin\alpha =e_1(\cos\alpha)=-\sin\alpha \alpha'$, i.e., $\alpha'=-\sigma=\sin\alpha\cos\alpha \bar{f}$, where $\alpha'$ denotes the first  derivative of $\alpha$ along $e_1=\bar{E}_2$.  For $\kappa_2=f_1=0$, biharmonic equation (\ref{lem1}) turns into (\ref{zr2}). A further computation using Claim 1 and (\ref{cI}), we see that (\ref{zr2}) turns into (\ref{zr3}).\\

 Finally, we show that $(N^2, h)$ is not flat. If otherwise, $K^N=0$, i.e., $e_1(f_2)-f_2^2=0$, using  the 4th equation of (\ref{RC0}) with $\sigma =-\bar{f}\sin\alpha \cos\alpha$, we immediately have
 \begin{equation}\label{bs12}
K^{M^2}=-3\bar{f}^2\sin^2\alpha.
\end{equation}
On the other hand, we know that  $M^2$ has Gauss curvature
\begin{equation}\label{bs13}
K^{M^2}=-\bar{E}_2(\bar{f})-\bar{f}^2=-e_1(\bar{f})-\bar{f}^2.
\end{equation}
Using Equations (\ref{bs12}) and (\ref{bs13}), we obtain
\begin{equation}\label{bs14}
\bar{f}'=3\bar{f}^2\sin^2\alpha-\bar{f}^2,
\end{equation}
  where $\bar{f}'$ denotes the first  derivative of $\bar{f}$ along $e_1$. \\
Substituting  $\alpha'=\sin\alpha\cos\alpha \bar{f}$ (i.e., $\bar{f}=\frac{\alpha'}{\sin\alpha\cos\alpha}$) into (\ref{bs14}) and simplifying the resulting equation we get
\begin{equation}\label{bs15}
\alpha''\cos\alpha-\alpha'^2\sin\alpha=0.
\end{equation}
One applies $e_1$ to both sides of (\ref{bs15}) and simplifies the resulting equation to have
\begin{equation}\label{bs16}
\alpha'''\cos\alpha-3\alpha'\alpha''\sin\alpha-\alpha'^3\cos\alpha=0.
\end{equation}
Adding (\ref{zr3}) to a $(-\sin\alpha\cos\alpha)$ multiple of (\ref{bs16}) and simplifying the results  with $\alpha'\neq0$ yields
\begin{equation}\label{bs17}
\alpha''\cos\alpha(4\sin^2\alpha+3)+\alpha'^2\sin\alpha(3\cos^2\alpha+3)=0.
\end{equation}
Similarly, adding  a $(-4\sin^2\alpha-3)$ multiple of (\ref{bs15}) to  (\ref{bs17}) and simplifying the results gives
\begin{equation}\label{bs17}
\alpha'^2\sin\alpha(9+\sin^2\alpha)=0,
\end{equation}
it follows that $\alpha$ is a constant, a contradiction. Then, we must have $K^N\neq0$.\\

Summarizing all results in the above cases we obtain the theorem.
\end{proof}

We now give a characterization of  proper biharmonic Riemannian submersions from $M^2\times\r$ by using the local coordinates as follows

\begin{theorem}\label{Ths1}
If $\pi:M^2\times\r\to (N^2,h)$ is a  proper biharmonic Riemannian submersion from the  product space, then\\
(i) The target surface is flat, and  locally, up to an isometry of the domain and/or codomain, $\pi$ is the projection of the special twisted product
\begin{equation}\label{TP}
 \pi: (\r^3, e^{2p(x, y)}dx^2+dy^2+dz^2) \to(\r^2,dy^2+dz^2),\;\pi(x, y, z)=(y,z),
 \end{equation}
with  $p_y\neq0$ being a harmonic function on  $(M^2,e^{2p(x, y)}dx^2+dy^2)$, i.e., it solves the PDE \\
\begin{equation}\label{mr}
\begin{array}{lll}
\Delta p_y :=p_{yyy}+p_{yy}p_y+e^{-2p(x, y)}(p_{xxy}-p_{xy}p_x)=0.
\end{array}
\end{equation}
or,\\
(ii) The target surface is non-flat, and  locally, up to an isometry of the domain and/or codomain, the map can be  expressed as
\begin{equation}\label{th00}
\begin{array}{lll}
\pi:(\r^3,e^{2p(x,y)}dx^2+dy^2+dz^2)\to (\r^2, dy^2+e^{2\lambda(y,\phi)}d\phi^2),\\
\pi(x, y, z)=(y,F\left(z- \int e^{\varphi(x)}dx\right)),
\end{array}
\end{equation}
where $p(x,y)=\ln|\tan\alpha(y)|+\varphi(x)$, $\lambda=\ln |\sin \alpha (y)|+w(\phi)$ with the functions $\varphi(x)$, $w(\phi)$  and nonconstant function $F(u)$ satisfying  $F'(z- \int e^{\varphi(x)}dx)=e^{-w(\phi)}$ and $z- \int e^{\varphi(x)}dx=\int e^{w(\phi)}d\phi$,  and $\alpha(y)$ is the angle between the fibers of $\pi$ and $ E_3=\partial_z$ solving the ODE (\ref{zr3}).
\end{theorem}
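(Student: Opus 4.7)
The starting point is Theorem~\ref{Cla1}, which has already reduced the problem to two canonical cases depending on whether the target is flat, and which supplies a canonical adapted orthonormal frame \eqref{GF} together with explicit integrability data in each case. My strategy is to translate this frame-theoretic information into coordinate information by building the semi-geodesic coordinates on $M^2$ that are adapted to the horizontal leaf through the chosen point. Throughout I will use the identity $\bar f = p_y$ for a semi-geodesic coordinate system in which $\partial_y = \bar E_2$ and $\bar E_1 = e^{-p(x,y)}\partial_x$, which follows from a direct Christoffel-symbol calculation for the metric $e^{2p}dx^2+dy^2$ showing $\nabla_{\bar E_1}\bar E_1 = -p_y\,\bar E_2$.

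For part~(i), I invoke Theorem~\ref{Cla1}(a): in this case $\cos\alpha=0$, so from \eqref{GF} we get $e_2=\pm E_3$ and $e_3=\pm\bar E_1$, meaning that the fibers of $\pi$ are curves lying entirely in the $M^2$-factor and the horizontal distribution is integrable. I then choose semi-geodesic coordinates $(x,y)$ on $M^2$ whose $y$-curves are the geodesics tangent to $\bar E_2 = e_1$, so that the metric on $M^2\times\r$ takes the form $e^{2p(x,y)}dx^2+dy^2+dz^2$ and the fibers are $x$-curves with $(y,z)$ constant. This forces $\pi(x,y,z)=(y,z)$ onto the flat target $(\r^2, dy^2+dz^2)$. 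Finally, I expand the harmonicity condition $\Delta\kappa_1=0$ (which equals $\Delta^{M^2}\bar f=0$ since $\bar f$ is $z$-independent) in these coordinates to obtain \eqref{mr}.

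For part~(ii), the same choice of semi-geodesic coordinates applies. The constraints $e_2(\alpha)=e_3(\alpha)=0$ from Theorem~\ref{Cla1}(b) translate to $\bar E_1(\alpha)=E_3(\alpha)=0$, so $\alpha=\alpha(y)$; similarly $\bar f=\bar f(y)=p_y$. The relation $\alpha'(y)=\sin\alpha\cos\alpha\,\bar f$ is separable, and integration yields $p(x,y)=\ln|\tan\alpha(y)|+\varphi(x)$ for some $\varphi$. To identify $\pi$, I take the first target coordinate to be $Y=y$ (which indeed satisfies $e_3(Y)=e_2(Y)=0$ and $e_1(Y)=1$) and seek a second coordinate $\Phi$ satisfying $e_1(\Phi)=e_3(\Phi)=0$. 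The first condition forces $\Phi=\Phi(x,z)$; substituting the explicit form of $e_3=\cos\alpha(e^{-\varphi(x)}\partial_x+\partial_z)$ into the second gives a first-order linear PDE whose method-of-characteristics yields $\Phi = F(z\pm\int e^{\varphi(x)}dx)$ for an arbitrary function $F$. Computing $|d\pi(e_2)|^2=1$ then fixes the metric coefficient in the target as $h_{\phi\phi}=\sin^2\alpha/F'(u)^2$, which rewrites exactly as $e^{2\lambda}$ with $\lambda=\ln|\sin\alpha(y)|+w(\phi)$ for $w(\phi)$ defined through $e^{-w(\phi)}=|F'(F^{-1}(\phi))|$.

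The main obstacle is not the flat case (where the computation is essentially bookkeeping) but part~(ii): one has to show that the single PDE governing $\Phi$ actually decouples into a function depending on $x,z$ only, and then verify that the induced target metric from the Riemannian submersion condition splits in the stated warped form $\lambda(y,\phi)=\ln|\sin\alpha(y)|+w(\phi)$. This decoupling is what makes the combination $\tan\alpha(y)\cdot e^{-p(x,y)}$ depend only on $x$ (it equals $\pm e^{-\varphi(x)}$), which is the exact calculation that ultimately forces the warped-product structure. The ODE \eqref{zr3} for $\alpha$ is inherited from Theorem~\ref{Cla1}, so no further biharmonicity analysis is needed once the coordinates are in place.
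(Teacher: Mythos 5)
Your proposal is correct and follows essentially the same route as the paper: both parts rest on Theorem \ref{Cla1}, on semi-geodesic coordinates adapted to $e_1=\bar E_2$ (so that $\bar f=p_y$), on integrating $\alpha'=\sin\alpha\cos\alpha\,\bar f$ to get $p=\ln|\tan\alpha(y)|+\varphi(x)$, and on the method of characteristics for the second target coordinate $\Phi=F\left(z\pm\int e^{\varphi(x)}dx\right)$. The only (harmless) variation is that in part (ii) you recover the warping function from the normalization $|d\pi(e_2)|=1$, whereas the paper reads it off from the projected bracket relation $\lambda_y=-f_2$; the two are equivalent and yield the same $\lambda=\ln|\sin\alpha(y)|+w(\phi)$.
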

\begin{proof}

First of all, note that by Theorem \ref{Cla1}  the local orthonormal  frame  $\{e_1=\bar{E}_2,\; e_2=-\cos\alpha \bar{E}_1+\sin\alpha \bar{E}_3,
\;e_3=\sin\alpha \bar{E}_1+\cos\alpha \bar{E}_3\}$ is an adapted frame of the Riemannian submersion $\pi$, and that the vector field $e_1=\bar{E}_2$ is a geodesic vector field on $M^2$. It is well known that we can choose local semi geodesic coordinates $(x,y)$ on  $M^2$  so that  $\bar{E}_1=e^{-p(x,y)}\partial_x, \bar{E}_2=\partial_y$, and  the metric on  $M^2$  takes the form $e^{2p(x,y)}dx^2+dy^2$. It follows that the product manifold $M^2\times\r$ can be locally represented as $(U\times\r\subseteq M^2\times\r, e^{2p(x,y)}dx^2+dy^2+dz^2)$.

For Statement (i), since the target surface is flat, it corresponds to the Case I in the proof of  Theorem \ref{Cla1}, i.e.,  the frame $\{e_1=\bar{E}_2,\; e_2= E_3,\;e_3=\bar{E}_1\}$ is an adapted frame  to the Riemannian submersion $\pi$ with the integrability data $\{f_1=f_2=\kappa_2=\sigma=0,\; \kappa_1=-\bar{f}\neq0\}$. Note that, by $\sigma=0$ and the  4th equation of (\ref{RC0}),  the horizontal distribution of the Riemannian submersion is integrable with flat integral submanifolds. So locally, up to an isometry of the domain  and/or the target manifold, the Riemannian submersion is the projection along the fibers (i.e., the integral curves of $\bar{E}_1=e^{-p(x,y)}\partial_x$ to the integral submanifold, and hence can be described by (\ref{TP})). It is easily checked that in this case $\kappa_1=-\bar{f}=-p_y$ and Equation (\ref{zr1}) reduces to (\ref{mr}).

For Statement (ii), Theorem \ref{Cla1} implies that  in this case, the target surface is  non-flat, $\cos\alpha\neq$ constant depending only on variable $y$, and $\{e_1=\bar{E}_2,\; e_2=-\cos\alpha \bar{E}_1+\sin\alpha \bar{E}_3,\;e_3=\sin\alpha \bar{E}_1+\cos\alpha \bar{E}_3\}$ is an adapted frame of the Riemannian submersion $\pi$ with integrability data (\ref{B0}).

Note that  the vector field $e_1=\bar{E}_2=\partial_y$ is a  basic vector field to the Riemannian submersion $\pi$. It is well known that there is a local vector field $\varepsilon_1$ on $(N^2, h)$ whose integral curves are geodesics on $(N, h)$ such that   $d \pi(e_1)=\varepsilon_1$.  It follows that we can choose a local semi geodesic coordinates $(y, \phi)$ on $N$ so that metric takes the form $h=dy^2+e^{2\lambda(y,\phi)}d\phi^2$, and the orthonormal frame  $\varepsilon_1=\partial_y, \varepsilon_2=e^{-\lambda}\partial_{\phi}$ satisfying  $d \pi(e_1)=\varepsilon_1, d \pi(e_2)=\varepsilon_2=e^{-\lambda}\partial_{\phi}$.

Summarizing the above, we conclude that in this case, up to and  isometry of the domain and/or target manifold, the Riemannian submersion can be expressed as
\begin{align}
\pi: (\r^3, e^{2p(x,y)}dx^2+dy^2+dz^2) \to(\r^2, h=dy^2+e^{2\lambda(y,\phi)}d\phi^2),\;\pi(x, y, z)=(y,\phi),
\end{align}
where $\phi=\phi(x, y, z)$ is a function to be determined.

Now we are to determine the functions $\phi=\phi(x,y, z)$, $p(x,y)$, and $\lambda(y, \phi)$.

A straightforward computation gives
\begin{equation}\label{d1}
\begin{array}{lll}
F_1\circ\pi\varepsilon_1+F_2\circ\pi\varepsilon_2=[\varepsilon_1,\varepsilon_2]=-\lambda_y\varepsilon_2,
\end{array}
\end{equation}
which implies
\begin{equation}\label{d2}\notag
\begin{array}{lll}
F_1=0,\;F_2=-\lambda_y.
\end{array}
\end{equation}
Hence, we have
\begin{equation}\label{d3}
\begin{array}{lll}
\lambda_y=-F_2\circ\pi=-f_2=\bar{f}\cos^2\alpha=\frac{\cos\alpha\alpha'(y)}{\sin\alpha}.
\end{array}
\end{equation}
Integrating both sides with respect to $y$ yields
\begin{equation}\label{d4}
\begin{array}{lll}
\lambda(y,\phi)=\int \frac{\cos\alpha\alpha'(y)}{\sin\alpha}dy+w(\phi)=\ln|\sin\alpha(y)|+w(\phi),
\end{array}
\end{equation}
where $w= w(\phi)$ is an arbitrary function on $\phi$.\\

 Noting that $p_y=\bar{f}(y)=\frac{\alpha'(y)}{\sin\alpha\cos\alpha}$ does not depend on $x$  we have
  \begin{equation}\label{d5}
\begin{array}{lll}
p(x,y)=\int \bar{f}(y)dy+\varphi(x)=\int\frac{\alpha'(y)}{\sin\alpha\cos\alpha}dy+\varphi(x)=\ln|\tan\alpha(y)|+\varphi(x),
\end{array}
\end{equation}
where $\varphi= \varphi(x)$ is a function on $x$.\\

To determine  the component function $\phi(x, y, z)$,  we use $e_1=\partial_y, e_2=-\cos\alpha e^{-p(x,y)}\partial_x+\sin\alpha \partial_z, e_3=\sin\alpha e^{-p(x,y)}\partial_x+\cos\alpha \partial_z$, and \;$d\pi(e_1)=\varepsilon_1=\partial_y,\; d\pi(e_2)=\varepsilon_2=e^{-\lambda}\partial_{\phi}$, and $d\pi(e_3)=0$ to have
\begin{equation}\label{c3}
\begin{array}{lll}
\partial_y=\varepsilon_1 &=d\pi(e_1)=\partial_y+\phi_y\,\partial_{\phi},\\
e^{-\lambda}\frac{\partial}{\partial \phi}=\varepsilon_2& =d\pi(e_2)=(-\cos\alpha e^{-p(x,y)}\phi_x\,+\sin\alpha \phi_z\,)\partial_{\phi},\\
0=& d\pi(e_3)=(\sin\alpha e^{-p(x,y)}\phi_x\,+\cos\alpha \phi_z\,)\partial_{\phi}.
\end{array}
\end{equation}
By comparing both sides of the 1st equation of (\ref{c3}), one finds that $\phi_y=\frac{\partial\phi}{\partial y}=0$,
which means that the function $\phi$ does not depend on $y$, i.e., $\phi=\phi(x,z)$.\\
Comparing coefficients of both sides of  the 2nd and the 3rd equation of (\ref{c3}) separately, we get
\begin{equation}\label{c4}
-\cos\alpha e^{-p(x,y)}\phi_x\,+\sin\alpha \phi_z=e^{-\lambda},\;
\sin\alpha e^{-p(x,y)}\phi_x\,+\cos\alpha \phi_z=0.
\end{equation}

Recall that $p(x,y)=\ln|\tan\alpha(y)|+\varphi(x)$, $\lambda=\ln |\sin \alpha (y)|+w(\phi)$, and the fact that  $\phi(x,z)$ is nonconstant since $d\pi(e_2)\neq0$, we use the method of the first integral to solve the 2nd  $PDE$ of (\ref{c4}) to have
\begin{equation}\label{c6}
\phi(x,z)= F\left(z-\int e^{\varphi(x)}dx\right),
\end{equation}
where $F=F(u)$ is a nonconstant differentiable function. Substituting this into the 1st  $PDE$ of (\ref{c4}) we have $F'(z-\int e^{\varphi(x)}dx)=e^{-w(\phi)}$ . It follows from this and (\ref{c6}) that $d\phi=d F=F'du=e^{-w(\phi)}du$ and hence $e^{w(\phi)}d\phi=du$ implying that $u=z-\int e^{\varphi(x)}dx=\int e^{w(\phi)}d\phi$.
This completes the proof of Statement (ii).
\end{proof}

 Applying Theorem \ref{Ths1}, we immediately have the following corollary which characterizes a proper biharmonic Riemannian submersions from a product manifold onto a non-flat surface  as a special map determined  up to an arbitrary function between two special warped product manifolds with the warping functions solving an ODE.
\begin{corollary}\label{co1}
A proper biharmonic Riemannian submersion $\pi:M^2\times\r\to (N^2,h)$ from product manifold into a non-flat surface is locally, up to an isometry of the domain and/or codomain, $\pi$ is a map between two special warped product spaces  given by
\begin{equation}\label{coo1}
\begin{array}{lll}
\pi:(\r^3, \tan ^2\alpha(y)\,d t^2+dy^2+dz^2)\to (\r^2,dy^2+\sin^2 \alpha(y)\,d\psi^2)\\
\pi(t,y,z)=(y,z- t),
\end{array}
\end{equation}
where $\alpha(y)$ is the angle between the fibers of  $\pi$ and $ E_3=\frac{\partial}{\partial z}$ solving the ODE
\begin{equation}\label{cx}
\begin{array}{lll}
\alpha'''\sin\alpha\cos^2\alpha+\cos\alpha(\sin^2\alpha+3)\alpha'\alpha''+\sin\alpha(2\cos^2\alpha+3)\alpha'^3=0,
\end{array}
\end{equation}
\end{corollary}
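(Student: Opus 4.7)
The plan is to derive the corollary as a direct consequence of Theorem \ref{Ths1}(ii) by performing two one-dimensional coordinate changes — one on the domain and one on the codomain — that absorb the ``free'' functions $\varphi(x)$ and $w(\phi)$ and reveal the warped product structure hidden inside the twisted product appearing in Theorem \ref{Ths1}.

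First, I would start from the conclusion of Theorem \ref{Ths1}(ii): locally, up to isometries, the submersion is
\[
\pi:(\r^3,e^{2p(x,y)}dx^2+dy^2+dz^2)\to (\r^2, dy^2+e^{2\lambda(y,\phi)}d\phi^2),\quad \pi(x,y,z)=\Bigl(y,\,F\bigl(z\pm \textstyle\int e^{\varphi(x)}dx\bigr)\Bigr),
\]
with $p(x,y)=\ln|\tan\alpha(y)|+\varphi(x)$ and $\lambda(y,\phi)=\ln|\sin\alpha(y)|+w(\phi)$. So the domain metric factors as $e^{2\varphi(x)}\tan^2\alpha(y)\,dx^2+dy^2+dz^2$ and the codomain metric as $dy^2+e^{2w(\phi)}\sin^2\alpha(y)\,d\phi^2$.

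Next, I would introduce new coordinates $t:=\int e^{\varphi(x)}dx$ on the $x$-axis of the domain (valid since $e^{\varphi(x)}>0$) and $\psi:=\int e^{w(\phi)}d\phi$ on the $\phi$-axis of the codomain. Under these diffeomorphisms, $dt=e^{\varphi(x)}dx$ and $d\psi=e^{w(\phi)}d\phi$, so the metrics take the clean warped product forms
\[
\tan^2\alpha(y)\,dt^2+dy^2+dz^2 \quad\text{and}\quad dy^2+\sin^2\alpha(y)\,d\psi^2,
\]
matching (\ref{coo1}). The second component of $\pi$, expressed in the new target coordinate, becomes $\psi=G\bigl(F(z\pm t)\bigr)$ where $G$ is the antiderivative used to define $\psi$; setting $\tilde F:=G\circ F$ yields an arbitrary nonconstant differentiable function and the map takes the form $\pi(t,y,z)=(y,\tilde F(z\pm t))$. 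Since no further constraint on $F$ was required by the biharmonic equations (only the ODE (\ref{zr3}) on $\alpha$), relabeling $\tilde F$ as $F$ gives precisely (\ref{coo1}). Finally, the ODE (\ref{cx}) is just (\ref{zr3}) restated, already proved in Theorem \ref{Cla1}(b).

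I expect no serious obstacle here, since the work lies upstream in Theorems \ref{Cla1} and \ref{Ths1}; the only step requiring minor care is verifying that the two one-variable substitutions are honest isometries of the relevant submanifolds (they are, being local diffeomorphisms of $\r$) and that the composition $G\circ F$ remains a genuinely arbitrary nonconstant smooth function, which follows because $G$ is a local diffeomorphism of $\r$.
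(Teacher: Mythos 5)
Your proposal is correct and follows essentially the same route as the paper: the paper's proof is exactly the two substitutions $t=\int e^{\varphi(x)}dx$ in the domain and $\psi=\int e^{w(\phi)}d\phi$ in the codomain applied to Theorem \ref{Ths1}(ii). Your extra remark that the second component becomes $G\circ F$ and must be relabeled as a new arbitrary nonconstant function is a worthwhile detail the paper leaves implicit.
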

\begin{proof}
This  follows from Statement (ii) of Theorem \ref{Ths1} and   the coordinate changes  $t=\int{e^{\varphi(x)}}dx,\; y=y,\;z=z$ in the domain  and $ y=y, \psi=\int{e^{w(\phi)}}d\phi$ in the codomain.
\end{proof}

\begin{remark}\label{r3}
(A) Note that it follows form \cite{AO}  (Corollary 3.2) that the Riemannian submersion given by the projection of the twisted product $\pi: (\r^3, e^{2p(x, y,z)}dx^2+dy^2+dz^2) \to(\r^2,dy^2+dz^2),\;\pi(x, y, z)=(y,z)$ is biharmonic if and only if $\Delta \kappa_1=0, \Delta \kappa_2=0$. In the case of  (i) in Theorem \ref{Ths1}, these reduce exactly to (\ref{mr}). Thus, Statement (i) of Theorem \ref{Ths1} not just characterizes proper biharmonic Riemannian submersions from product manifold to a flat surface locally but also recovers a special result in \cite{AO}  (Corollary 3.2).\\
(B) The biharmonicity of the Riemannian submersion defined by the projection of warped product $\pi: (\r^3, e^{2p( y,z)}dx^2+dy^2+dz^2) \to(\r^2,dy^2+dz^2),\;\pi(x, y, z)=(y,z)$ had been studied in \cite{BMO, LO, WO, GO, AO}.
\end{remark}

(D) We would like to point out that, unlike in case (ii) in Theorem \ref{Ths1} and Corollary \ref{co1} where the function $p(x, y)$ can be proved to be independent of $x$ variable so the metric is of a warped product type, in case (i) the map is completely determined (to be the projection)  but we do not know whether the function $p(x, y)$ is independent of $x$ variable  or not.\\

When the target surface is flat,  it is easy to have many examples proper biharmonic Riemannian submersions $\pi:M^2\times\r\to \r^2$ from the projection of the warped product spaces see e.g., \cite{BMO, LO, GO, AO}. For example, the following projections are proper biharmonic Riemannian submersions:
\begin{align} \notag
&(i)\; \pi :(\r^{2}\times\r,(\cosh y)^4dx^2+dy^2+dz^2)
\to (\r^2 ,dy^2 + dz^2),\; \pi(x,y,z) =(y,z),\\\notag
&(ii)\;
\pi :(\r^2_{+}\times\r,y^4dx^2+dy^2+dz^2)
\to (\r^2_{+} ,dy^2 + dz^2),\; \pi(x,y,z) =(y,z).
\end{align}

When the target surface is non-flat, we would like to point out that there exist many local proper biharmonic Riemannian submersions  $\pi:M^2\times\r\to (N^2,h)$. In fact,  by introducing new variable $u(\alpha)=\frac{\alpha''(y)}{\alpha'(y)^2}$,  we have  $\alpha'\alpha''=u\, \alpha'^3$, $\alpha'''=(u'+2u^2) a'^3$ and hence
(\ref{cx}) reduces to a Riccati equation as
\begin{equation}\label{rcx}
\begin{array}{lll}
u'(\alpha)+2u^2+\frac{\sin^2\alpha+3}{\sin\alpha\cos\alpha}u+\frac{2\cos^2\alpha+3}{\cos^2\alpha}=0,
\end{array}
\end{equation}
any solution of which gives a family of locally defined proper biharmonic Riemannian submersions  $M^2\times\r\to (N^2,h)$.

Finally, note that it was proved in \cite{WO, WO1} that a proper biharmonic Riemannian submersion  $M^2(c)\times \r$ exists only in the case when $c<0$, and
$\pi:(\r^{3},e^{2\sqrt{-c}\,y}dx^2+dy^2+dz^2)\to (\r^2 ,dy^2 + dz^2),\; \pi(x,y,z) =(y,z)$ is an example. Now we can prove that,  up to isometry, this is the only one.   \begin{proposition}\label{Pr3}
A Riemannian submersion  $\pi:M^2(c)\times\r\to (N^2,h)$ is proper biharmonic if and only if $c<0$,  $(N^2,h)$ is flat, and, up to an isometry, the map can be expressed as $\pi: H^2(c)\times\r\to\r^2$ with
$\pi:(\r^{3},e^{2\sqrt{-c}\,y}dx^2+dy^2+dz^2)\to (\r^2 ,dy^2 + dz^2),\; \pi(x,y,z) =(y,z)$.
\end{proposition}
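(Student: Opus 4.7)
The forward direction, that the stated projection $\pi: H^2(c) \times \r \to \r^2$ is proper biharmonic for $c<0$, is already known from \cite{WO, WO1}. For the converse, the plan is to apply Theorem \ref{Ths1} to a proper biharmonic Riemannian submersion $\pi: M^2(c) \times \r \to (N^2, h)$, obtaining two mutually exclusive local scenarios: (i) $(N^2, h)$ is flat, or (ii) $(N^2, h)$ is non-flat. The strategy is to show that scenario (ii) is incompatible with the constant Gauss curvature of $M^2(c)$, while scenario (i) forces $c<0$ together with the claimed normal form.

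For scenario (i), by Theorem \ref{Ths1}(i) the map has local form $\pi(x,y,z) = (y,z)$ with domain metric $e^{2p(x,y)}dx^2 + dy^2 + dz^2$, where $p$ solves the biharmonic PDE (\ref{mr}). Since $M^2(c)$ has constant Gauss curvature, $p$ also satisfies the Riccati equation $p_{yy} + p_y^2 + c = 0$. The plan is to use this identity and its $y$-derivative to eliminate $p_{yy}$ and $p_{yyy}$ in (\ref{mr}) to reduce the biharmonic condition to
\begin{equation*}
p_y(p_y^2 + c) + e^{-2p}(p_{xxy} - p_{xy}p_x) = 0.
\end{equation*}
For each fixed $x$, the function $u(y) = p_y(x,y)$ solves $u_y + u^2 + c = 0$. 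The next step is to substitute each branch of the Riccati solution, namely the constant solutions $u = \pm\sqrt{-c}$ (available iff $c \leq 0$) and the non-constant $\tan$, $\tanh$, $\coth$, or $1/(y+C(x))$ branches depending on the sign of $c$, into the reduced equation and clear denominators to obtain a polynomial identity in $u$. In the constant branch the identity is satisfied trivially, forcing $c<0$ and yielding $p = \pm\sqrt{-c}\,y + \varphi(x)$; the change of coordinate $\tilde{x} = \int e^{\varphi(x)}dx$ then produces the standard metric $e^{2\sqrt{-c}\,y}d\tilde{x}^2 + dy^2 + dz^2$ and $\pi(\tilde{x}, y, z) = (y, z)$. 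In each non-constant branch, a direct expansion shows that the coefficient of $u$ in the polynomial identity equals $k^2(e^{2D(x)} + 3C'(x)^2)$ (with $k^2 = |c|$, and $C, D$ the $x$-dependent parameters of the Riccati solution), which cannot vanish; these branches are therefore excluded.

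For scenario (ii), by Theorem \ref{Ths1}(ii) one has $p(x,y) = \ln|\tan\alpha(y)| + \varphi(x)$ with $\alpha(y)$ solving the third-order ODE (\ref{zr3}). The constant curvature relation $p_{yy} + p_y^2 + c = 0$, combined with $p_y = \alpha'/(\sin\alpha\cos\alpha)$, becomes the second-order ODE
\begin{equation*}
\alpha''\cos\alpha + 2\alpha'^2\sin\alpha + c\sin\alpha\cos^2\alpha = 0.
\end{equation*}
The plan is to differentiate this ODE once, use (\ref{zr3}) to eliminate $\alpha'''$, and combine the resulting linear relation between $\alpha''$ and $\alpha'^2$ with the original ODE to obtain the closed-form $\alpha'^2 = F(\alpha) := c\cos^2\alpha\,(4 - 5\sin^2\alpha)/(4\sin^2\alpha - 3)$. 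Differentiating the identity $\alpha'^2 = F(\alpha)$ and substituting back into the second-order ODE produces, after setting $s = \sin^2\alpha$, the polynomial identity $40s^3 - 66s^2 + 32s - 4 \equiv 0$, which is manifestly false. This contradiction excludes scenario (ii) entirely.

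The main obstacle is the two polynomial-identity calculations: conceptually straightforward, but algebraically demanding. In scenario (i), cleanly identifying the coefficient $k^2(e^{2D} + 3C'^2)$ requires careful expansion after substituting explicit formulas for $p$, $p_x$, $p_{xy}$, $p_{xxy}$ in terms of the Riccati variable $u$; in scenario (ii), eliminating $\alpha'''$ and then simplifying the rational expression in $\alpha, \alpha', \alpha''$ into a polynomial in $s$ requires some bookkeeping. Once both computations are carried out, $c<0$ and the flatness of $(N^2, h)$ are forced, and the remaining $\varphi(x)$ freedom is absorbed by $\tilde{x} = \int e^{\varphi(x)}dx$, yielding the unique-up-to-isometry normal form asserted in the proposition.
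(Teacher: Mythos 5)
Your proposal reaches the right conclusion by a genuinely different route. The paper disposes of the non-flat case and of $c\geq 0$ in one stroke by citing the classification in \cite{WO1}, and then, in the flat case, works invariantly with the adapted frame: from $e_1(\kappa_1)=\kappa_1^2+c$ and $\Delta\kappa_1=0$ a short sequence of commutator identities yields $e_3(\kappa_1)=0$ and $\kappa_1^2=-c$, i.e., $p_y=\pm\sqrt{-c}$. You instead make the argument self-contained: you re-derive the exclusion of the non-flat target from Theorem \ref{Ths1}(ii) by pairing the third-order ODE (\ref{zr3}) with the constant-curvature Riccati relation, and you handle the flat case by classifying the branches of $u=p_y$ as solutions of $u_y+u^2+c=0$ and testing each against (\ref{mr}). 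Both reductions are correct in outline---the elimination in scenario (ii) does produce $\alpha'^2=c\cos^2\alpha\,(4-5\sin^2\alpha)/(4\sin^2\alpha-3)$---but the two decisive polynomial identities are misstated. In scenario (i) with $c=-k^2<0$, the coefficient of $u$ is $k^2(e^{2D}-3C'^2)$, not $k^2(e^{2D}+3C'^2)$, and it \emph{can} vanish; you must first use the $u^3$-coefficient $3C'^2=0$ to force $C'=0$ and only then conclude from $k^2e^{2D}\neq0$ that the branch is excluded (your stated coefficient is correct only for $c>0$). In scenario (ii), with $s=\sin^2\alpha$ the identity collapses to $4(s-1)^3=0$ rather than $40s^3-66s^2+32s-4=0$; the contradiction is then that $s\equiv1$ forces $\cos\alpha=0$, which is excluded in case (b) of Theorem \ref{Cla1}, and more generally any constant $s$ forces $\alpha'=-\sigma=0$, equally excluded. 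With these corrections your argument closes, and it has the merit of not relying on the external input from \cite{WO1}; the paper's frame computation is shorter but less self-contained.
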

\begin{proof}
Firstly, it follows from \cite{WO1} that  a proper biharmonic Riemannian submersion
 $\pi:M^2(c)\times\r\to (N^2,h)$ from a product space exists only in the case: $H^2(c)\times\r\to\r^2$ with $c<0$.\\

 Secondly, by Theorem \ref{Cla1} and \ref{Ths1}, we know that
  locally, up to an isometry of the domain and/or codomain, a proper biharmonic Riemannian submersion
 $\pi:H^2(c)\times\r\to\r^2$ with $c<0$ is expressed as
\begin{equation}\label{th2}
 \pi: (\r^3, e^{2p(x, y)}dx^2+dy^2+dz^2) \to(\r^2,dy^2+dz^2),\;\pi(x, y, z)=(y,z),
 \end{equation}
 and  the orthonormal frame $\{e_1=\partial_y,\; e_2= \partial_z,\;e_3=e^{-p}\partial_x\}$ is adapted  to the Riemannian submersion $\pi$ with the integrability data $f_1=f_2=\kappa_2=\sigma=0,\; \kappa_1=-p_y \ne 0$.
It is easily checked that in this case, (\ref{RC0}) reduces to
\begin{equation}\label{th3}
e_1(\kappa_1)=\kappa_1^2+c,\;e_2(\kappa_1)=0,
\end{equation}
and biharmonic equation (\ref{lem1}) reads
\begin{equation}\label{th4}
\Delta\kappa_1=0.
\end{equation}
A straightforward computation gives
\begin{equation}\label{th5}
\begin{array}{lll}
\Delta\kappa_1=e_1e_1(\kappa_1)+e_3e_3(\kappa_1)-\nabla_{e_1}{e_1}(\kappa_1)-\nabla_{e_2}{e_2}(\kappa_1)-\nabla_{e_3}{e_3}(\kappa_1)\\
=e_1(\kappa_1^2+c)+e_3e_3(\kappa_1)-\kappa_1e_1(\kappa_1)=e_3e_3(\kappa_1)+\kappa_1^3+c\kappa_1.
\end{array}
\end{equation}
Substituting  (\ref{th5}) into (\ref{th4}), we have
\begin{equation}\label{th6}
\begin{array}{lll}
e_3e_3(\kappa_1)=-\kappa_1^3-c\kappa_1.
\end{array}
\end{equation}
Applying $e_3$ to both sides of  the 1st equation of (\ref{th3}) and using the fact that $e_1e_3(\kappa_1) = [e_1, e_3](\kappa_1) + e_3e_1(\kappa_1)$,
we have
\begin{equation}\label{th7}
\begin{array}{lll}
e_1e_3(\kappa_1)=3\kappa_1e_3(\kappa_1).
\end{array}
\end{equation}
Using  (\ref{th3}), (\ref{th6}),  (\ref{th7}), and a direct computation we get
\begin{equation}\label{th8}
\begin{array}{lll}
e_1e_3\{e_3(\kappa_1)\}-e_3e_1\{e_3(\kappa_1)\} =[e_1,e_3] \{e_3(\kappa_1)\}
=\kappa_1e_3e_3(\kappa_1)
=-\kappa_1^4-c\kappa_1^2,
\end{array}
\end{equation}
and
\begin{equation}\label{th9}
\begin{array}{lll}
e_1e_3\{e_3(\kappa_1)\}-e_3e_1\{e_3(\kappa_1)\}
=e_1\{e_3e_3(\kappa_1)\}-e_3\{e_1e_3(\kappa_1)\}\\
=-c\kappa_1^2-4c-3e_3^2(\kappa_1).\\
\end{array}
\end{equation}

Comparing  (\ref{th8}) with (\ref{th9}), we get
\begin{equation}\label{th10}
\begin{array}{lll}
3e_3^2(\kappa_1)=\kappa_1^4-4c.\\
\end{array}
\end{equation}
 Applying $e_3$ to both sides of  (\ref{th10}) and using (\ref{th6}) to
simplify the resulting equation we have
\begin{equation}\label{th12}
\begin{array}{lll}
\kappa_1(5\kappa_1^2+3c)e_3(\kappa_1)=0,
\end{array}
\end{equation}
which implies $e_3(\kappa_1)=0$. Substituting this into (\ref{th6}) and using that fact that  $c\kappa_1\neq0$ we obtain
 \begin{equation}\label{th13}
\begin{array}{lll}
\kappa_1^2=-c>0,
\end{array}
\end{equation}
 which implies that
 \begin{equation}\label{th14}
\begin{array}{lll}
p_y^2=-c,\;({\rm and\;hence})\; p_y=\pm\sqrt{-c}.
\end{array}
\end{equation}
It follows that
 \begin{equation}\label{th15}
\begin{array}{lll}
p(x,y)=\pm\sqrt{-c}y+\varphi(x),
\end{array}
\end{equation}
where $\varphi(x)$ is a an arbitrary function.\\

So we conclude that up to an isometry of the domain and/or codomain, a proper biharmonic Riemannian submersion
 $\pi:H^2(c)\times\r\to\r^2$ with $c<0$ is expressed as
\begin{equation}\label{th2}
 \pi: (\r^3, e^{2\sqrt{-c}\;y}dx^2+dy^2+dz^2) \to(\r^2,dy^2+dz^2),\;\pi(x, y, z)=(y,z).
 \end{equation}
 Thus, we obtain the proposition.
\end{proof}
\begin{remark}\label{r5}
By Proposition \ref{Pr3}, for $c\geq0$, there exists no proper biharmonic Riemannian submersion
$\pi:M^2(c)\times\r\to (N^2,h)$ no matter what $(N^2, h)$ is.
\end{remark}

\end{document}